\newcommand{\abs}[1]{\lvert#1\rvert}
\DeclareMathOperator{\ad}{AD}
\DeclareMathOperator{\mc}{MC}
\DeclareMathOperator{\omc}{\overline{MC}}
\DeclareMathOperator{\supp}{Supp}
\newcommand*\bigcdot{\mathpalette\bigcdot@{1}}
\newcommand*\bigcdot@[2]{\mathbin{\vcenter{\hbox{\scalebox{#2}{$\m@th#1\bullet$}}}}}
\definecolor{ao(english)}{rgb}{0.0, 0.0, 1.0}
\newtheorem{theorem}{Theorem}
\newtheorem{definition}{Definition}
\newtheorem{lemma}{Lemma}
\newtheorem{remark}[theorem]{Remark}
\title[Symmetric Domino Tilings of Aztec Diamonds]{Symmetric Domino Tilings of Aztec Diamonds}
\author[P. Paul]{Pravakar Paul}
\address{Mathematical and Physical Sciences division, School of Arts and Sciences, Ahmedabad University, Ahmedabad 380009, Gujarat, India}
\email{pravakar.paul@ahduni.edu.in, manjil@saikia.in}
\author[M. P. Saikia]{Manjil P. Saikia}
\keywords{Perfect matchings, domino tilings, Aztec diamonds.}
\subjclass[2010]{Primary 05C30; Secondary 05A15, 05C70, 52C20, 68R10}
\thanks{The work of the second author is supported by an Ahmedabad University Start-Up Grant (Reference No. URBSASI24A5).}
\begin{document}

\begin{abstract}
In this paper, we give inductive sum formulas to calculate the number of diagonally symmetric, and diagonally \& anti-diagonally symmetric domino tilings of Aztec Diamonds. As a byproduct, we also find such a formula for the unrestricted case as well. Our proofs rely on a new technique for counting the number of perfect matchings of graphs, proposed by the authors recently.
\end{abstract}

\maketitle

\section{Introduction}

In 1992, Elkies, Kuperberg, Larsen, and Propp \cite{AD1, AD2} introduced a new class of objects which they called Aztec diamonds. The Aztec diamond of order $n$ (denoted by $\ad(n)$) is the union 
of all unit squares inside the contour $\abs{x}+\abs{y}=n+1$ (see the top left of Figure \ref{fig:ad} for an Aztec diamond of order $4$; at this moment we ignore the other figures). They considered the problem of counting the number of domino tilings of an Aztec diamond of order $n$ (see the top right of Figure \ref{fig:ad} for an example of such a tiling for $n=4$) and proved that this number is equal to $2^{n(n+1)/2}$. A domino tiling of $\ad(n)$ is a covering of $\ad(n)$ using dominoes without gaps or overlaps. They gave four different proofs of this result.
\begin{theorem}[Aztec Diamond Theorem, \cite{AD1, AD2}]\label{adm}
The number of domino tilings of the Aztec diamond of order $n$ is $2^{n(n+1)/2}$.
\end{theorem}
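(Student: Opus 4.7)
The plan is to prove the theorem by establishing the multiplicative recurrence
\[
T(n) = 2^{n} \cdot T(n-1), \qquad T(0) = 1,
\]
where $T(n)$ denotes the number of domino tilings of $\ad(n)$; iterating this recurrence yields $T(n) = 2^{n(n+1)/2}$ immediately.

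The natural route I would take is the \emph{domino shuffling} argument originally due to Elkies, Kuperberg, Larsen, and Propp. After fixing a checkerboard coloring of $\ad(n)$, I would classify each domino in any tiling as North, South, East, or West according to the color of a designated corner square, and highlight every $2 \times 2$ block covered by two parallel dominoes of the same type (call these \emph{bricks}). The shuffling operation has three stages: first, delete a prescribed subset of bricks; second, slide every remaining domino by one unit in the direction dictated by its N/S/E/W type; third, fill the resulting empty $2 \times 2$ squares with new bricks, choosing each independently to be either horizontal or vertical. The output is a tiling of $\ad(n-1)$, and the procedure is invertible up to a $2^n$-fold choice of fillings.

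To make the bookkeeping clean I would introduce a weighted generating function $Z_n(q) = \sum_T q^{b(T)}$, where $b(T)$ counts bricks in $T$. Shuffling then provides a functional identity relating $Z_n(q)$ to $Z_{n-1}(q)$; evaluating at $q = 1$ (so that bricks carry trivial weight) collapses this identity to the target recurrence $T(n) = 2^{n} T(n-1)$. As an aside, one could alternatively pursue the Lindström--Gessel--Viennot route by bijecting tilings with non-intersecting families of Schr\"oder-like lattice paths and evaluating the resulting Hankel determinant, but the shuffling argument is more self-contained and extends naturally to the symmetric settings studied in this paper.

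The main obstacle is twofold: first, verifying that the sliding step is well-defined, i.e.\ no two sliding dominoes collide and the uncovered region decomposes cleanly into isolated $2 \times 2$ squares that can be independently re-filled; second, showing that the total multiplicative factor produced by the inverse shuffle equals exactly $2^n$, not some other power of $2$ depending on the target tiling. Both checks are local but require a careful case analysis around the jagged boundary of the diamond, and this boundary analysis is where the bulk of the combinatorial work lies.
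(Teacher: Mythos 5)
Your proposal is a correct strategy, but it is not the route this paper takes: Theorem \ref{adm} is cited from Elkies--Kuperberg--Larsen--Propp, and the formula is then \emph{rederived} here as Theorem \ref{thm3} via the state-sum machinery. Concretely, the paper cuts the dual graph of $\ad(n)$ along the main diagonal into two copies of the half-graph $\mc_n$, records for each half the vector of matching counts indexed by which diagonal vertices are already matched (the state sum decomposition, i.e.\ the function $C_n$), and applies the gluing formula of Theorem \ref{thm:main} so that the total count is the sum of squares $\|C_n\|_{l^2}$; the induction is carried by building $\mc_n$ from $\mc_{n-1}$ through pendant-edge and path attachments (Lemma \ref{lem5}). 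Your domino-shuffling argument instead establishes the multiplicative recurrence $T(n)=2^nT(n-1)$ directly on tilings. What shuffling buys is a self-contained, purely bijective proof of the closed form; what the paper's decomposition buys is that the \emph{same} vector of coefficients simultaneously yields the unrestricted count (as an $l^2$ norm) and the diagonally symmetric count (as an $l^1$ norm), which is the actual point of the paper and is not visible from the shuffling recurrence.

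Two caveats on your sketch. First, the two ``obstacles'' you name --- that the slid dominoes do not collide and that the holes form independent $2\times 2$ cells, and that the net multiplicative factor is exactly $2^n$ --- are the entire content of the shuffling proof, not peripheral checks; in particular the factor $2^n$ only emerges after the cancellation between destroyed and created blocks (the number of created cells is $d(T)+n$, not $n$), which is exactly why the weighted generating function is needed, so your proof is a plan rather than a proof until these are carried out. Second, a ``brick'' to be destroyed is a $2\times 2$ block whose two parallel dominoes have \emph{opposite} types and face each other (a colliding N/S or E/W pair), not ``the same type'' as you wrote; and your closing claim that shuffling ``extends naturally to the symmetric settings studied in this paper'' is unsupported --- the diagonal symmetry classes treated here are precisely the ones for which no product formula is known, and diagonal reflection exchanges horizontal and vertical dominoes in a way that does not obviously commute with the shuffle.
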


\begin{figure}[!htb]
\centering
\includegraphics[scale=.79]{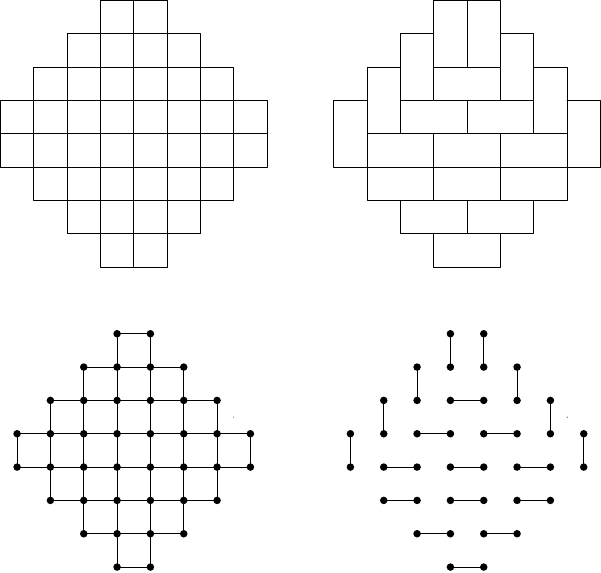}
\caption{Top row: the Aztec Diamond $\ad(n4)$ (left) and a tiling of $\ad(n4)$ (right); bottom row: the planar dual graph of $\ad(n4)$ (left) and the perfect matching of the planar dual graph of $\ad(n4)$ corresponding to the tiling directly above it (right).}
\label{fig:ad}
\end{figure}

The proofs given by Elkies et. al. \cite{AD1, AD2} gave insights into the connection of Aztec Diamonds with other objects, namely alternating sign matrices (ASMs), monotone triangles, Grassmann algebras, square ice, etc. Several generalizations of Aztec Diamonds have also appeared in the literature, for instance Aztec rectangles \cite{Saikia}, quartered Aztec Dimonds \cite{l-1, l-5}, hybrid regions on the square and triangular lattices \cite{l-1, l-4}, Aztec triangles \cite{t-1, t-2, t-3}, etc. In other direction, asymptotics of the domino tiling model on a class of domains related to Aztec Diamonds have also been studied \cite{bk}. The Arctic circle phenomenon for Aztec Diamonds and some of its generalizations have also well-studied \cite{shor, cohn, zfr}. All of these makes the Aztec Diamonds one of the most studied classes of combinatorial objects, with connections to different areas of mathematics, including but not limited to probability theory (see, for instance \cite{ch-2, ch-1}), statistical physics (see, for instance \cite{statphy, statphy-2}), other areas of combinatorics (via its connections to ASMs), etc.

One combinatorial aspect of Aztec Diamonds that deserve proper study are the symmetric domino tilings. Indeed, the symmetry classes of domino tilings of $\ad(n)$ under the action of the dihedral group of order $8$ have been discussed in the literature by Ciucu \cite{CiucuSymmetry} and Yang \cite{Yang}. In total, there are five symmetry classes; the enumerations of three of which have been solved completely, while the other two remain open with no known conjectured formulas. In this paper, we look at these two remaining symmetry classes. 

 Let $r$ and $t$ be the generators corresponding to a rotation by $90^\circ$ and a reflection across the vertical diagonal of $\ad(n)$, respectively. Then, the dihedral group of order $8$ is
\[
D_8=\langle r, t| r^4=t^2=(tr)^2=\text{id}\rangle.
\]
We list the symmetry classes in Table \ref{table1}. The first few values for the number of domino tilings for each case are also given in the last column of Table \ref{table1}.
\begin{table}[htb]\label{table1}
\begin{tabular}{l|l|l}
Subgroup of $D_8$ & Symmetry Class & Sequence of values  \\ \hline
\{\text{id}\} & Unrestricted Aztec Diamonds & $2, 8, 64, 1024, 32768,2097152, \ldots$  \\
$\langle r \rangle $ & Quarter-turn symmetric & $0,0,2,4,0,0,80, 320, \ldots$   \\
$\langle r^2 \rangle $ & Half-turn symmetric  & $ 2, 4, 12, 48, 288, 2304, \ldots$ \\
$\langle t \rangle$ & Diagonally symmetric & $2, 6, 24, 132, 1048, 11960, 190912,\ldots$ \\
$\langle r^2, t \rangle$ & Diagonally \& anti-diagonally symmetric & $2, 4, 10, 28, 96, 384, 1848,  \ldots$\\\hline
\end{tabular}
\caption{Symmetry Classes of Aztec Diamonds.}
\end{table}
As already mentioned, a formula for the unrestricted case is given by Theorem \ref{adm}. A formula for the quarter-turn symmetric case and the half-turn symmetric case can be found in the work of Ciucu \cite{CiucuSymmetry} and Yang \cite{Yang}. No formula is known or conjectured for the diagonally symmetric and the diagonally \& anti-diagonally symmetric cases. In some recent work, Lee \cite{Lee1, Lee2} has looked into two more classes of Aztec Diamonds not covered in the previous five classes; however, we do not discuss them here.

\begin{table}[htb]\label{table2}
\begin{tabular}{l|l|l}
$n$ & $\langle t \rangle$-symmetric domino tilings & $\langle r^2, t \rangle$-symmetric domino tilings \\ \hline
$1$ & $2^1$ & $2^1$  \\
$2$ & $2^1\cdot 3^1$ & $2^2$   \\
$3$ & $2^3\cdot 3^1$  & $2^1\cdot 5^1$ \\
$4$ & $2^2\cdot 3\cdot 11$ & $2^2\cdot 7$ \\
$5$ & $2^{10}$ & $2^5\cdot 3^1$\\
$6$ & $2^3\cdot 5^1\cdot 13^1\cdot 23^1$ & $2^7\cdot 3^1$\\
$7$ & $2^6\cdot 19^1\cdot 157^1$ & $2^3 \cdot 3^1\cdot 7^1\cdot 11^1$ \\ \hline
\end{tabular}
\caption{Prime Factorizations of small values of the number of symmetric domino tilings.}
\end{table}

Ideally, combinatorialists prefer a `product' (or, in some cases, a `sum') formula of the type in Theorem \ref{adm}, but due to the presence of large prime factors in the prime factorization of the values in Table \ref{table2} for the diagonally symmetric and the diagonally \& anti-diagonally symmetric cases, it seems unlikely that a product formula exists for these two cases. In this paper, we give an inductive sum formula for these two remaining symmetry classes (Theorems \ref{thm2} and \ref{thm5}). We also give a similar formula for the unrestricted Aztec Diamond case (Theorem \ref{thm3}). The novelty of our approach here is that, unlike trying via a brute-force way to calculate the number of domino tilings of the regions, our formula builds up from the bottom values and gives a clean and inductive way to calculate the number of domino tilings of the desired regions. The main ingredient that we use is a new approach to calculating the number of perfect matchings of a planar graph discussed in some recent work by us \cite{PaulSaikia}. We defer discussion on this towards the later part of the paper. In our earlier work \cite{PaulSaikia} we gave an independent proof of Theorem \ref{adm} using our new techniques. Our proof also brings our the connection between Aztec Diamonds and ASMs in a much more natural way. This approach was also used recently by the first author \cite{pp} to shed some light on a tiling problem on the triangular lattice.

This paper is organized as follows: in Section \ref{sec:two} we present the inductive maps that we use to calculate the number of unrestricted and diagonally-symmetric domino tilings of $\ad(n)$, in Section \ref{sec:three} we present the inductive maps to calculate the number of diagonally \& anti-diagonally symmetric domino tilings of $\ad(n)$; the proofs of the results mentioned in Sections \ref{sec:two} and \ref{sec:three} are given in Section \ref{sec:5}.

\section{Diagonally Symmetric Domino Tilings of Aztec Diamonds}\label{sec:two}

In this section, we present a way to inductively compute the number of diagonally symmetric domino tilings of Aztec Diamonds in Theorem \ref{thm2}. We also give a similar result for the unrestricted case in Theorem \ref{thm3}. Since the construction of some of the maps we use in our results is intricate, we push the proofs of the theorems to Section \ref{sec:5}.

We define a collection of functions $\{C_{n} \}_{n \in \mathbb{N}}$ as follows
\[ C_{n}: \{ 0,1 \}^{2n} \to \mathbb{Z}^{+}, \] where $\{0,1 \}^{2n}$ denotes the collection of binary sequences of length $2n$ and $\mathbb{Z}^{+}$ denotes the set of non-negative integers.  We associate the following important quantity to each $C_n$. 
\begin{definition}[$l^k$- norm of $C_n$]
    The $l^{k}$-norm of $C_n$ is defined by the following expression 
\[ ||C_{n}||_{l^{k}} :=  \sum_{\epsilon_{i} \in \{0,1 \}} \left(C_{n}\left(\epsilon_1, \ldots, \epsilon_{2n} \right)\right)^k. \]
\end{definition}
\noindent Before defining the functions $C_{n}$, we list a property which we prove in Section \ref{sec:5}. 
\begin{lemma}\label{lem1}
     The support of $C_{n}$ is bounded above by the following quantity \[ \supp(C_{n}) \leq \left\lfloor  \frac{2^{2n}}{3} \right\rfloor +1, \]
    where $\supp(C_n)$ denotes the number of elements in the domain with a non-zero image.
\end{lemma}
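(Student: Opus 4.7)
The plan is to translate the claim about $\supp(C_n)$ into a combinatorial condition on the binary sequences $(\epsilon_1, \ldots, \epsilon_{2n})$ and then count the sequences satisfying it. As will be set up in Section \ref{sec:5}, the function $C_n$ arises from the inductive perfect-matching scheme of \cite{PaulSaikia}, where each bit $\epsilon_i$ encodes a local choice (for instance, the matching state of a specified edge or vertex) in a partial matching of the planar dual of $\ad(n)$; nonvanishing of $C_n$ on a sequence corresponds to that partial data being extendible to an actual perfect matching of the full region.

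Given this interpretation, I would first isolate the local obstructions, that is, certain patterns on consecutive bits that force $C_n$ to vanish, typically because they impose contradictory constraints on a shared vertex or edge of the dual graph. Writing $s_n := \supp(C_n)$, such an obstruction should yield a recursive bound of the form
\[
s_{n+1} \le 4 s_n - 2,
\]
where the factor $4$ records the four ways to extend a valid length-$2n$ sequence by two additional bits $(\epsilon_{2n+1}, \epsilon_{2n+2})$, and the subtracted $2$ counts the extensions that introduce a forbidden pattern at the newly added boundary. The base case $n=1$ is an immediate check: $\{0,1\}^2$ has four sequences and $\lfloor 2^2/3 \rfloor + 1 = 2$, so one only needs to verify that $C_1$ vanishes on at least two of the four inputs. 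A straightforward induction then gives $s_n \le (4^n + 2)/3 = \lfloor 4^n/3 \rfloor + 1$, since indeed $4 \cdot (4^n + 2)/3 - 2 = (4^{n+1} + 2)/3$.

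The main difficulty is pinpointing the precise local obstruction and establishing the correct correction term in the recurrence, as this requires unpacking what the bits $\epsilon_i$ mean in the matching-theoretic framework of \cite{PaulSaikia} and reading off which consecutive-bit patterns cannot be completed to a legal matching of $\ad(n+1)$ from a valid matching of $\ad(n)$. Once those patterns are identified, the induction is routine. An alternative to the inductive argument would be to encode the forbidden patterns as a transfer matrix acting on the bit pairs and to recover the bound by an eigenvalue computation, which might give a cleaner proof but is essentially equivalent to the above.
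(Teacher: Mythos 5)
There is a genuine gap: your argument is a plan whose central mechanism is neither established nor likely to work in the form you describe. The paper's proof rests on a single precise invariant: if $(\epsilon_1,\ldots,\epsilon_{2n})\in\supp(C_n)$, then the integer $\sum_{i=1}^{2n}\epsilon_i\cdot 2^{i-1}$ is divisible by $3$, and the lemma follows immediately because the multiples of $3$ in $\{0,1,\ldots,2^{2n}-1\}$ number exactly $(4^n+2)/3=\lfloor 2^{2n}/3\rfloor+1$. That invariant is then verified to be preserved by each map in the inductive definition $C_n=C_{n-1}\circ R_{2n-2}\circ E_{2n-2}\circ K_{2n}$ (complementation preserves it because $1+2+\cdots+2^{2n-1}\equiv 0\pmod 3$; the branching in $E$ changes the value by $3\cdot 2^i$; the boundary contraction $K$ is checked case by case). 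Your proposal never identifies this, or any other, concrete description of a superset of the support; you explicitly defer "pinpointing the precise local obstruction," which is exactly the content of the lemma.

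Moreover, the replacement you sketch is structurally mismatched with the actual construction in two ways. First, the recursion $s_{n+1}\le 4s_n-2$ presupposes that valid length-$2(n+1)$ sequences arise by appending two bits at one boundary of a valid length-$2n$ sequence, with exactly two forbidden extensions each; but the inductive step goes through $K_{2n}$ (which collapses the first \emph{and} last two coordinates, sending e.g.\ anything beginning with $(0,1)$ or ending with $(1,0)$ to $0$), a branching expansion $E_{2n-2}$ in the interior, and a global bit-reversal $R_{2n-2}$, so no such two-bit boundary extension structure is available, and the correction term $-2$ is unjustified. Second, and more fundamentally, divisibility of $\sum\epsilon_i 2^{i-1}$ by $3$ is equivalent to a condition on the alternating bit sum and hence depends on all $2n$ bits; it cannot be detected by forbidding any finite list of consecutive-bit patterns, so the "local obstruction / transfer matrix on bit pairs" framework cannot produce the stated bound (a transfer matrix whose states are residues modulo $3$ would work, but that is just the paper's invariant in disguise, and you would still have to prove that the maps $R$, $E$, $K$ preserve the residue class). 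The arithmetic you do carry out (the base case $s_1\le 2$ and the identity $4\cdot(4^n+2)/3-2=(4^{n+1}+2)/3$) is correct but does not substitute for the missing invariant.
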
    
\begin{theorem}\label{thm2}
 The $l^{1}$ norm of $C_{n}$, $||C_{n}||_{l^{1}}$ enumerates the number of diagonally symmetric, that is $\left< t \right>$-invariant domino tilings of $\ad(n)$. 
\end{theorem}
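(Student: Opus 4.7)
The plan is to follow the authors' recent perfect-matching paradigm from \cite{PaulSaikia}. First, pass from domino tilings of $\ad(n)$ to perfect matchings of the planar dual graph $G_n$ (illustrated in Figure \ref{fig:ad}), under which a $\langle t \rangle$-invariant domino tiling corresponds to a perfect matching of $G_n$ invariant under the induced axis reflection $\tau$. The target is thus to enumerate $\tau$-invariant perfect matchings of $G_n$.

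Second, I would slice $G_n$ along the diagonal axis of symmetry into two congruent halves $G_n^{+}$ and $G_n^{-}$ joined through a sequence of $2n$ axis positions (matching the length, measured in unit steps, of the diagonal of $\ad(n)$). Any $\tau$-invariant perfect matching of $G_n$ is entirely determined by its restriction to $G_n^{+}$ together with binary data $\epsilon=(\epsilon_1,\ldots,\epsilon_{2n}) \in \{0,1\}^{2n}$ recording, at each axis position, whether the associated vertex is matched with its mirror partner across the axis (an axis-crossing edge, automatically $\tau$-fixed) or with a partner strictly inside $G_n^{+}$ (in which case the mirror edge appears inside $G_n^{-}$ by symmetry). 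I would then \emph{define}
\[
C_n(\epsilon) := \#\bigl\{\text{valid partial matchings of } G_n^{+} \text{ compatible with boundary data } \epsilon\bigr\},
\]
which is manifestly a non-negative integer and so fits the signature $\{0,1\}^{2n} \to \mathbb{Z}^{+}$.

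With this definition, the map sending a $\tau$-invariant perfect matching of $G_n$ to the pair (restriction to $G_n^{+}$, boundary string $\epsilon$) is a bijection onto the set of pairs where the restriction is a matching of $G_n^{+}$ compatible with $\epsilon$. Summing over all $\epsilon$ yields
\[
\#\bigl\{\langle t \rangle\text{-symmetric tilings of } \ad(n)\bigr\} \;=\; \sum_{\epsilon \in \{0,1\}^{2n}} C_n(\epsilon) \;=\; \|C_n\|_{l^{1}},
\]
which is precisely the claim.

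The technical core lies in verifying the bijection: one has to check that reflection symmetry imposes no global constraint beyond the local boundary data recorded in $\epsilon$, and simultaneously characterize which $\epsilon$ can actually be realized by a symmetric matching. Many $\epsilon$ will have $C_n(\epsilon)=0$ because a prescribed axis-crossing pattern forces an incompatible partial matching of $G_n^{+}$ --- this is what makes the support bound in Lemma \ref{lem1} nontrivial, keeping the support at roughly $4^{n}/3$ rather than the full $4^{n}$. Pinning down the right local notion of ``compatibility'' at each axis position and showing it is both necessary and sufficient should reduce to a case-by-case analysis along the diagonal, and I expect this to be the main obstacle of the proof.
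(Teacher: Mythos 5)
Your first half is sound and matches the paper's strategy: pass to the dual graph, cut along the diagonal, and observe that a $\langle t\rangle$-invariant perfect matching is determined by its restriction to one half together with the boundary data $\epsilon$ along the axis. This is exactly the content of the paper's Theorem \ref{thm6}, proved the same way (a matching of the top copy of $\mc_n$ with defect $\epsilon_1\otimes\cdots\otimes\epsilon_{2n}$ extends uniquely to the bottom copy with the same defect).

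However, there is a genuine gap in the second half. You \emph{define} $C_n(\epsilon)$ to be the number of partial matchings of the half-graph compatible with $\epsilon$, and then the identity $\#\{\text{symmetric tilings}\}=\|C_n\|_{l^1}$ is essentially tautological. But Theorem \ref{thm2} is a statement about the \emph{specific} function $C_n$ defined in Section \ref{sec:two} by the base case $C_1$ and the recursion $C_n = C_{n-1}\circ R_{2n-2}\circ E_{2n-2}\circ K_{2n}$. Your argument never shows that your combinatorially defined function coincides with that inductively defined one, and this identification is the technical core of the paper's proof (Lemma \ref{lem5}). Establishing it requires building $\mc_n$ from $\mc_{n-1}$ in explicit steps (pendant-edge addition, connected sums with line graphs $L_{2n-2}$ and $L_1$) and verifying, via the Patching and Transfer Lemmas and the Fibonacci elements $\mathcal{F}_k$, that each geometric step acts on the state sum coefficients exactly as the maps $R_{2n-2}$, $E_{2n-2}$, and $K_{2n}$ act on binary strings. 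Without this inductive bridge your proof establishes only that \emph{some} function's $l^1$ norm counts the symmetric tilings, not that the paper's $C_n$ does; the ``case-by-case analysis along the diagonal'' you defer is precisely where all of the remaining work lives.
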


\begin{theorem}\label{thm3}
 The $l^{2}$ norm of $C_{n}$, $||C_{n}||_{l^{2}}$ enumerates the number of domino tiling of $\ad(n)$, which is given by the formula
    \[ ||C_{n}||_{l^{2}}= 2^{\frac{(n+1)\cdot n}{2}}.\]
\end{theorem}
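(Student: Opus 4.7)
My plan is to leverage a half-diamond decomposition analogous to the one that will underlie the definition of $C_n$ in Theorem \ref{thm2}. Cut $\ad(n)$ along its vertical diagonal into a left half and a right half. The function $C_n(\epsilon_1,\ldots,\epsilon_{2n})$ is engineered to count tilings of one of these closed halves whose interaction with the diagonal is prescribed by the profile $(\epsilon_1,\ldots,\epsilon_{2n})$. In the perfect-matching language of \cite{PaulSaikia}, the bit $\epsilon_i$ records whether the $i$-th edge of the planar dual graph lying on the vertical diagonal is selected in the matching; once these bits are fixed, the rest of the matching is freely completed on either side, independently.

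Given this setup, the first step is to establish the decomposition bijection
\[
\{\text{tilings of } \ad(n)\} \;\longleftrightarrow\; \bigsqcup_{\epsilon \in \{0,1\}^{2n}} \mathcal{L}_n(\epsilon) \times \mathcal{R}_n(\epsilon),
\]
where $\mathcal{L}_n(\epsilon)$ and $\mathcal{R}_n(\epsilon)$ denote the tilings of the left and right halves carrying diagonal profile $\epsilon$. The forward map just splits a tiling along the vertical diagonal, while the inverse reassembles a compatible pair; because the crossing information is recorded once in $\epsilon$, every tiling of $\ad(n)$ arises from exactly one pair and no pair is counted twice. The second step is to observe that the reflection $t$ fixes the vertical diagonal and therefore provides a profile-preserving bijection $\mathcal{L}_n(\epsilon) \to \mathcal{R}_n(\epsilon)$, giving $\lvert \mathcal{R}_n(\epsilon)\rvert = \lvert \mathcal{L}_n(\epsilon)\rvert = C_n(\epsilon)$.

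Combining the two steps with Theorem \ref{adm} yields
\[
2^{n(n+1)/2} \;=\; \#\{\text{tilings of } \ad(n)\} \;=\; \sum_{\epsilon\in\{0,1\}^{2n}} C_n(\epsilon)^2 \;=\; \|C_n\|_{l^2},
\]
which is the claimed identity. The main obstacle I expect is bookkeeping rather than conceptual: I need to pin down a precise convention for how a half-tiling carries its profile $\epsilon$ so that cutting and re-gluing are mutually inverse and are compatible with the convention used to set up $C_n$ in the proof of Theorem \ref{thm2}. Once that convention is fixed, the $l^2$ identity drops out directly, and Theorem \ref{thm2} can then be read as the specialization in which the two halves of a tiling are forced to coincide under $t$, collapsing $\sum_\epsilon C_n(\epsilon)^2$ to $\sum_\epsilon C_n(\epsilon) = \|C_n\|_{l^1}$.
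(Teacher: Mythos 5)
Your proposal is correct and is essentially the paper's own argument: the paper also glues two mirror copies of the half-diamond dual graph $\mc_n$ along the $2n$ diagonal-crossing edges, identifies $C_n(\epsilon)$ with the number of half-matchings of a given crossing profile (Lemma \ref{lem5}), and obtains the total count as $\sum_\epsilon C_n(\epsilon)^2$ via the connected-sum formula (Theorem \ref{thm:main}) together with the transfer lemma, which is the algebraic form of your cut-and-reglue bijection. Like you, the paper defers the substantive step --- that $C_n(\epsilon)$ really counts half-tilings with profile $\epsilon$ --- to the machinery shared with Theorem \ref{thm2}.
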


\noindent We illustrate Theorems \ref{thm2} and \ref{thm3} at the end of this section for $n=2$.

Next, we move to the definition of $C_{n}$. We provide an inductive definition of $ \{ C_{n} \}_{n \in \mathbb{N}}$. The base case $n=1$ is given by the following formula 
\begin{equation}
    C_{1}(0,0) = 1, \quad C_{1}(0,1)= 0, \quad   C_{1}(1,0)= 0, \quad C_{1}(1,1) = 1.
\end{equation}
Inductively we shall define $C_{n}$ using $C_{n-1}$. To do so, we need a few more constructions.

\subsection*{Construction on free abelian group.}
Any set map \( f: S \longrightarrow T \) canonically extends to a map \(f: \mathbb{Z} \left<  S \right> \longrightarrow \mathbb{Z} \left< T \right>  \), where $\mathbb{Z} \left< S \right>$ and $\mathbb{Z} \left< T \right>$ denote the free abelian group generated by $S$ and $T$ respectively (see for instance \cite{DummitFoote}). By an abuse of notation, we shall denote the map on free abelian groups by the same letter $f$. 

Similarly, any set map $g: S \longrightarrow \mathbb{Z}^{+}$ canonically extends to a unique homomorphism of abelian groups 
\( g: \mathbb{Z} \left< S \right> \longrightarrow \mathbb{Z}\). Again, by an abuse of notation, we shall use the letter $g$ to denote the unique homomorphism. 

\subsection*{The reverse map $R_{n}$.} We define a map called the reverse map $R_n$ as follows 
\[R_{n}: \{ 0,1 \}^{n} \to \{0,1 \}^{n} \]
\begin{align*}
    R_{n} \left( \epsilon_1, \ldots , \epsilon_n \right) &= (\bar{\epsilon_1}, \ldots, \bar{\epsilon_n}),
\end{align*}
where $\{ \epsilon_i,  \bar{\epsilon_{i}} \} = \{0, 1 \}$, for all $1\leq i\leq n$. By the previous paragraph the symbol $R_{n}$ shall also denote the induced map on the respective free abelian groups. Note that $R_{n}$ is a fixed point free involution. 

\subsection*{The expansion map $E_{n}$.} For all $ n \geq 2$ we define a collection of maps called the expansion maps $\{E_{n} \}_{n \geq 2}$. The expansion map $E_{n}$ is a map of the following form
  \begin{align*}
      E_{n}: \mathbb{Z} \left< \{0,1 \}^{n} \right> &\longrightarrow \mathbb{Z} \left< \{0,1 \}^{n} \right> \\
      E_{n}&=    E^{n-1}_{n} \circ \cdots \circ E^{1}_{n},  
  \end{align*}
where we are composing the the maps $E^{i}_{n}$, which we define now. For $ 1 \leq i \leq n-1 $, the maps $E^{i}_{n}$ are called the basic expansions, which are given by the following formulas 
  \begin{align*}
      E^{i}_{n}: \mathbb{Z} \left< \{0,1 \}^{n} \right> &\longrightarrow \mathbb{Z} \left< \{0,1 \}^{n} \right> \\
      E^{i}_{n} \Bigl((\epsilon_1, \ldots, \epsilon_{i-1}, 0,0, \ldots ,\epsilon_{n}) \Bigl)&= (\epsilon_1, \ldots, \epsilon_{i-1}, 0,0, \ldots ,\epsilon_{n}) + (\epsilon_1, \ldots, \epsilon_{i-1}, 1,1, \ldots ,\epsilon_{n}), \\ 
      E^{i}_{n} \Bigl((\epsilon_1, \ldots, \epsilon_{i-1}, 0,1, \ldots ,\epsilon_{n}) \Bigl)&= (\epsilon_1, \ldots, \epsilon_{i-1}, 0,1, \ldots ,\epsilon_{n}), \\
      E^{i}_{n} \Bigl((\epsilon_1, \ldots, \epsilon_{i-1}, 1,0, \ldots ,\epsilon_{n}) \Bigl)&= (\epsilon_1, \ldots, \epsilon_{i-1}, 1,0, \ldots ,\epsilon_{n}), \\
      E^{i}_{n} \Bigl((\epsilon_1, \ldots, \epsilon_{i-1}, 1,1, \ldots ,\epsilon_{n}) \Bigl)&= (\epsilon_1, \ldots, \epsilon_{i-1}, 1,1, \ldots ,\epsilon_{n}).
  \end{align*}
The map $E_3$ is explicitly calculated below. In order to calculate the expansion map $E_{3}$, we need to first calculate the basic expansion maps $E^{1}_{3}$ and $E^{2}_{3}$, both of which are done now.

First, we calculate $E_3^1$:
\begin{align*}
      E^{1}_{3}: \mathbb{Z} \left< \{0,1 \}^{3} \right> &\longrightarrow \mathbb{Z} \left< \{0,1 \}^{3} \right>, \\
      E^{1}_{3} \bigl( (0,0,0) \bigl) &= (0, 0, 0) + (1, 1, 0), \\
      E^{1}_{3} \bigl( (0,0,1) \bigl) &= (0, 0, 1) + (1, 1, 1), \\
      E^{1}_{3} \bigl( (0,1,0) \bigl) &= (0, 1, 0), \\ 
      E^{1}_{3} \bigl( (0,1,1) \bigl) &= (0, 1, 1), \\ 
      E^{1}_{3} \bigl( (1,0,0) \bigl) &= (1, 0 , 0), \\
      E^{1}_{3} \bigl( (1,0,1) \bigl) &= (1, 0, 1), \\
      E^{1}_{3} \bigl( (1,1,0) \bigl) &= (1, 1, 0), \\
      E^{1}_{3} \bigl( (1,1,1) \bigl) &= (1, 1, 1).
  \end{align*}
  Similarly, $E_3^2$ is as follows:
  \begin{align*}
      E^{2}_{3}: \mathbb{Z} \left< \{0,1 \}^{3} \right> &\longrightarrow \mathbb{Z} \left< \{0,1 \}^{3} \right>, \\
      E^{2}_{3} \bigl( (0,0,0) \bigl) &= (0, 0, 0) + (0, 1, 1), \\
      E^{2}_{3} \bigl( (0,0,1) \bigl) &= (0, 0, 1),  \\
      E^{2}_{3} \bigl( (0,1,0) \bigl) &= (0, 1, 0), \\ 
      E^{2}_{3} \bigl( (0,1,1) \bigl) &= (0, 1, 1) \\ 
      E^{2}_{3} \bigl( (1,0,0) \bigl) &= (1, 0 , 0) + (1, 1, 1),\\
      E^{2}_{3} \bigl( (1,0,1) \bigl) &= (1, 0, 1), \\
      E^{2}_{3} \bigl( (1,1,0) \bigl) &= (1, 1, 0), \\
      E^{2}_{3} \bigl( (1,1,1) \bigl) &= (1, 1, 1).
  \end{align*}
Finally the expansion map $E_{3}$  is given by the following formula: 
\begin{align*}
    E_{3}: \mathbb{Z} \left< \{0,1 \}^{3} \right> &\longrightarrow \mathbb{Z} \left< \{0,1 \}^{3} \right>, \\
      E_{3} \bigl( (0,0,0) \bigl) &= (0, 0, 0) + (1, 1, 0)+ (0, 1, 1), \\
      E_{3} \bigl( (0,0,1) \bigl) &= (0, 0, 1) +(1, 1, 1 ), \\
      E_{3} \bigl( (0,1,0) \bigl) &= (0, 1, 0), \\ 
      E_{3} \bigl( (0,1,1) \bigl) &= (0, 1, 1) ,\\ 
      E_{3} \bigl( (1,0,0) \bigl) &= (1, 0 , 0) + (1, 1, 1),\\
      E_{3} \bigl( (1,0,1) \bigl) &= (1, 0, 1), \\
      E_{3} \bigl( (1,1,0) \bigl) &= (1, 1, 0), \\
      E_{3} \bigl( (1,1,1) \bigl) &= (1, 1, 1).
\end{align*} 
\subsection*{The contraction map $K_{n}$.} For $ n \geq 4 $, we define a collection of maps called the contraction maps $\{ K_{n} \}_{n \geq 4}$. The contraction map $K_{n}$ is a map of the following form
 \[ K_{n}: \mathbb{Z} \left< \{0,1 \}^{n} \right>  \longrightarrow \mathbb{Z} \left< \{0,1 \}^{n-2} \right> \]


 

More explicitly the map $K_{n}$ is given below:
\begin{align*}
    K_{n} \bigl( (0,0, \epsilon_3, \ldots, \epsilon_{n-2}, 0,0) \bigl) &= (1, \epsilon_3, \ldots, \epsilon_{n-2}, 1), \\ 
    K_{n} \bigl( (0,0, \epsilon_3, \ldots, \epsilon_{n-2}, 0,1) \bigl) &= (1, \epsilon_3, \ldots, \epsilon_{n-2}, 0), \\
    K_{n} \bigl( (0,0, \epsilon_3, \ldots, \epsilon_{n-2}, 1,0) \bigl) &= 0, \\
    K_{n} \bigl( (0,0, \epsilon_3, \ldots, \epsilon_{n-2}, 1,1) \bigl) &= (1, \epsilon_3, \ldots, \epsilon_{n-2}, 1), \\
    K_{n} \bigl( (0,1, \epsilon_3, \ldots, \epsilon_{n-2}, 0,0) \bigl) &= 0,  \\
    K_{n} \bigl( (0,1, \epsilon_3, \ldots, \epsilon_{n-2}, 0,1) \bigl) &= 0, \\ 
    K_{n} \bigl( (0,1, \epsilon_3, \ldots, \epsilon_{n-2}, 1,0) \bigl) &= 0, \\
    K_{n} \bigl( (0,1, \epsilon_3, \ldots, \epsilon_{n-2}, 1,1) \bigl) &= 0, \\
    K_{n} \bigl( (1,0, \epsilon_3, \ldots, \epsilon_{n-2}, 0,0) \bigl) &= (0, \epsilon_3, \ldots, \epsilon_{n-2}, 1), \\
    K_{n} \bigl( (1,0, \epsilon_3, \ldots, \epsilon_{n-2}, 0,1) \bigl) &= (0, \epsilon_3, \ldots, \epsilon_{n-2}, 0), \\
    K_{n} \bigl( (1,0, \epsilon_3, \ldots, \epsilon_{n-2}, 1,0) \bigl) &= 0, \\
    K_{n} \bigl( (1,0, \epsilon_3, \ldots, \epsilon_{n-2}, 1,1) \bigl) &= (0, \epsilon_3, \ldots, \epsilon_{n-2}, 1), \\
    K_{n} \bigl( (1,1, \epsilon_3, \ldots, \epsilon_{n-2}, 0,0) \bigl) &= (1, \epsilon_3, \ldots, \epsilon_{n-2}, 1), \\
    K_{n} \bigl( (1,1, \epsilon_3, \ldots, \epsilon_{n-2}, 0,1) \bigl) &= (1, \epsilon_3, \ldots, \epsilon_{n-2}, 0), \\
    K_{n} \bigl( (1,1, \epsilon_3, \ldots, \epsilon_{n-2}, 1,0) \bigl) &= 0, \\
    K_{n} \bigl( (1,1, \epsilon_3, \ldots, \epsilon_{n-2}, 1,1) \bigl) &= (1, \epsilon_3, \ldots, \epsilon_{n-2}, 1).
\end{align*}

Although this definition of $K_n$ may appear ad-hoc, its origin will become clear as we proceed in our discussion.

\subsection*{The inductive definition of $C_n$.} 
Finally, with all the above background we can write down the formula for $C_n$ in terms of $C_{n-1}$ as follows
\[ C_{n}=   C_{n-1} \circ   R_{2n-2} \circ E_{2n-2} \circ K_{2n}  \]
where we treat the maps $C_{n}$ as the homomorphism of abelian groups as discussed above.

We illustrate the calculation for Theorems \ref{thm2} and \ref{thm3} when $n=2$ in Table \ref{table3}.
\begin{table}[htb]\label{table3}
\begin{tabular}{l|l|l|l|l}
$\epsilon=(\epsilon_1, \epsilon_2,\epsilon_3,\epsilon_4)$ & $K_4(\epsilon)$ & $E_2( K_4(\epsilon))$ & $R_2( E_2( K_4(\epsilon)))$ & $C_2(\epsilon)=C_1( R_2(E_2(K_4(\epsilon))))$\\ \hline
$(0,0,0,0)$ & $(1,1)$ & $(1,1)$ & $(1,1)$ & $1$ \\
$(0,0,0,1)$ & $(1,0)$ & $(1,0)$ & $(0,1)$ & $0$ \\
$(0,0,1,0)$ & $0$ & $0$ & $0$ & $0$ \\
$(0,0,1,1)$ & $(1,1)$ & $(1,1)$ & $(1,1)$ & $1$ \\
$(0,1,0,0)$ & $0$ & $0$ & $0$ & $0$ \\
$(0,1,0,1)$ & $0$ & $0$ & $0$ & $0$ \\
$(0,1,1,0)$ & $0$ & $0$ & $0$ & $0$ \\
$(0,1,1,1)$ & $0$ & $0$ & $0$ & $0$ \\
$(1,0,0,0)$ & $(0,1)$ & $(0,1)$ & $(1,0)$ & $0$ \\
$(1,0,0,1)$ & $(0,0)$ & $(0,0)+(1,1)$ & $(0,0)+(1,1)$ & $2$ \\
$(1,0,1,0)$ & $0$ & $0$ & $0$ & $0$ \\
$(1,0,1,1)$ & $(0,1)$ & $(0,1)$ & $(1,0)$ & $0$ \\
$(1,1,0,0)$ & $(1,1)$ & $(1,1)$ & $(1,1)$ & $1$ \\
$(1,1,0,1)$ & $(1,0)$ & $(1,0)$ & $(0,1)$ & $0$ \\
$(1,1,1,0)$ & $0$ & $0$ & $0$ & $0$ \\
$(1,1,1,1)$ & $(1,1)$ & $(1,1)$ & $(1,1)$ & $1$ \\
\hline
\end{tabular}
\caption{Illustration of the map $C_2$.}
\end{table}
\noindent The $l^1$ norm of $C_2$ is then just the sum of the entries in the last column of Table \ref{table3}, which equals $6$ and this matches with the data in Table \ref{table1}, giving us the number of $\langle t\rangle$-invariant domino tilings of $\ad(2)$. The $l^2$ norm of $C_2$ is then just the sum of the squares of the entries in the last column of Table \ref{table3}, which equals $8$ and this matches with the data in Table \ref{table1}, giving us the number of unrestricted domino tilings of $\ad(2)$.

\section{Diagonally and Antidiagonally Symmetric Domino Tilings of Aztec Diamonds}\label{sec:three}

In this section, we present a way to inductively compute the number of diagonally \& anti-diagonally symmetric domino tilings of Aztec Diamonds in Theorem \ref{thm5}. Since the construction of some of the maps we use in our result is intricate, we push the proof of the theorem to Section \ref{sec:5}.

We define a collection of functions $\{ C^\prime_{n} \}_{n \in \mathbb{N} }$ as follows: 
\[C^\prime_{n}: \{0,1 \}^{2n+1} \to \mathbb{Z}^{+}. \]
We define $C^\prime_1$ and $C^\prime_{2}$ separately. Inductively, 
$C^\prime_{n+2}$ shall be defined using $C^\prime_{n}$. Elements in $\{0,1 \}^{2n+1}$ can be grouped into two parts: 
the first part consists of elements where the middle element $\epsilon_{n+1}$ is $0$ and the second part consists of elements where the middle element $\epsilon_{n+1}$ is $1$. Given such a function we associate the following important quantity with it.

\begin{definition}[$l^{1/2}$-norm of $C^\prime_{n}$]
    The $l^{1/2}$-norm of $C^\prime_{n}$ is defined as
\begin{align*}
||C^\prime_{n}||_{l^{1/2}} :=&   \sum_{\epsilon_i \in \{0,1\} \, \, \text{\&} \, \, \epsilon_{n+1}=0}  C^\prime_{n} \bigl( ( \epsilon_1, \ldots, \epsilon_n, 0, \epsilon_{n+2}, \ldots, \epsilon_{2n
+1} ) \bigl) \\ & \quad + \sum_{\epsilon_i \in \{0,1\} \, \, \text{\&} \, \, \epsilon_{n+1}=1} 2 \cdot C^\prime_{n} \bigl( ( \epsilon_1, \ldots, \epsilon_n, 1, \epsilon_{n+2}, \ldots, \epsilon_{2n
+1} ) \bigl) .
\end{align*}
\end{definition}

\begin{theorem}\label{thm5}
 The $l^{1/2}$ norm of $C^\prime$, $||C^\prime_{n}||_{l^{1/2}}$ enumerates the number of diagonally and anti-diagonally symmetric, that is $\left< r^2, t \right>$ invariant domino tilings of $\ad(n+1)$.
\end{theorem}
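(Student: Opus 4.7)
The plan is to parallel the strategy developed for Theorem \ref{thm2}, but applied to a smaller fundamental domain dictated by the extra symmetry. Because every $\langle r^2, t \rangle$-invariant tiling of $\ad(n+1)$ is determined by its restriction to the ``quarter'' cut out by the diagonal and anti-diagonal axes, together with compatible dimer placements across these two axes, the enumeration reduces to counting partial matchings of this quarter-region indexed by the boundary data on the diagonal. I would interpret $C^\prime_{n}(\epsilon_1, \ldots, \epsilon_{2n+1})$ as the number of such partial matchings whose interface along the diagonal is encoded by $\epsilon$, where $\epsilon_i = 1$ records a cell matched by a dimer that crosses the diagonal and $\epsilon_i = 0$ records a cell matched internally. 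The weight $2$ assigned to sequences with $\epsilon_{n+1} = 1$ in the $l^{1/2}$-norm then accounts for the fact that the central cell lies on the intersection of the two axes of symmetry, so a crossing dimer there admits two symmetric placements under the Klein four-group, whereas $\epsilon_{n+1}=0$ configurations are uniquely determined by their half.

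The first step of the proof is to establish the base cases $C^\prime_{1}$ and $C^\prime_{2}$ directly, by listing the $\langle r^2, t\rangle$-invariant tilings of $\ad(2)$ and $\ad(3)$ and comparing with the relevant column of Table \ref{table1}. For the inductive step $C^\prime_{n+2}$ from $C^\prime_{n}$, I would peel two outer layers off the quarter region at a time, which corresponds to removing four rows of the full diamond under the symmetry and explains the shift by $2$ in the index rather than the shift by $1$ used for $C_n$ in Section \ref{sec:two}. The peeling operation is to be realized by contraction, expansion, and reverse maps analogous to $K_{2n}$, $E_{2n-2}$, and $R_{2n-2}$ from Section \ref{sec:two}, but now enforcing the additional anti-diagonal invariance. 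At each step, the maps translate boundary configurations on the $(2n+5)$-long diagonal interface into admissible configurations on the $(2n+1)$-long interface, weighted by the number of consistent extensions in the peeled layers.

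The main obstacle will be the bookkeeping near the center of the diamond, where the two symmetry axes intersect. Unlike the purely diagonal case, where only one axis constrains the interface, here the interface data is rigidly coupled at the central position, and the case analysis for the contraction map must simultaneously respect both reflections. Verifying that each inductive peeling produces the correct generating function and that the factor of $2$ appears exactly at $\epsilon_{n+1} = 1$ will be the technical heart of the argument; the remainder follows from the same abelian-group formalism that underlies Theorem \ref{thm2}, applied to the reduced fundamental domain with the appropriate parity adjustments coming from the odd length $2n+1$ of the indexing sequence.
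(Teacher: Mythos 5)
Your overall strategy --- reduce to a quarter fundamental domain, encode the interface along the symmetry axes by binary strings, and induct by peeling layers via reverse/expansion/contraction maps, with a special weight of $2$ at the central position --- is exactly the route the paper takes. However, there is a genuine gap: the theorem is about the \emph{specific} functions $C^\prime_n$ already defined in Section \ref{sec:three} by the composite
\[
C^\prime_{n+2}= C^\prime_{n} \circ \left( R_{n} \otimes id \otimes R_{n} \right) \circ cK_{n+1} \circ \left( E_{n+1} \otimes E_{n+1} \right) \circ mK_{n+1} \circ K_{2n+5},
\]
and essentially the entire content of the proof is to verify that each of these factors realizes a concrete surgery on the fundamental-domain graphs $\omc_n$. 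You name this as ``the technical heart'' and then do not carry it out: you do not account for why there are \emph{two} distinct contractions near the centre ($mK_{n+1}$ and $cK_{n+1}$), why the expansion splits as $E_{n+1}\otimes E_{n+1}$ rather than a single expansion of the full interface, or why the reverse map spares the middle coordinate ($R_n \otimes id \otimes R_n$). The paper resolves this with a six-step construction of $\omc_{n+1}$ from $\omc_{n-1}$ (pendant edges on all but the central distinguished vertex, a connected sum with $L_3$ at the central vertex, connected sums with $L_{n-1}$ on each arm, another $L_3$ across the corners, and two copies of $L_2$), matching each step to one factor of the composite by explicit multiplication tables in the matching algebra. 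Without that verification the inductive step is an assertion, not a proof, and it is precisely at the centre --- where you correctly predict the difficulty --- that the case analysis is nontrivial.

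A second, smaller gap concerns the factor of $2$ at $\epsilon_{n+1}=1$, which needs its own argument (Theorem \ref{thm7} in the paper) before the weighted sum can be identified with the tiling count. The paper's reason is that the central $2\times 2$ square of the Aztec diamond either occurs as a sub-tiling --- in which case it admits exactly two tilings, both $\langle r^2,t\rangle$-invariant and invisible to the fundamental domain --- or it does not, in which case the tiling is determined by the quarter. Your phrasing (``a crossing dimer admits two symmetric placements under the Klein four-group'') gestures at the same multiplicity but is not precise enough to justify that the factor $2$ attaches exactly to the sequences with $\epsilon_{n+1}=1$ and to no others.
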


We give a proof of Theorem \ref{thm5} in the next section. In this section, we focus on the construction of the map $C^\prime_{n}$. To begin with, we write down the formula for $C^\prime_{1}$ and $C^\prime_{2}$. We first start with the definition of $C^\prime_{1}$.
\begin{equation}
     C^\prime_{1} \bigl( (1,1,1) \bigl) = 1, \quad C^\prime_{1} \bigl( (1,0,0) \bigl) = 1, \quad C^\prime_{1} \bigl( (0,0,1) \bigl) = 1, \quad  C^\prime_{1} \bigl( (\epsilon_1,\epsilon_2,\epsilon_3) \bigl) = 0 \, \, \text{otherwise}.
\end{equation}
Note that $||C^\prime_{1}||_{l^{1/2}} = 4$, which agrees with the value in Table \ref{table2}. Next, we shall define $C^\prime_{2}$.
\begin{multline}
     C^\prime_{2} \bigl( (0,0,1,0,1) \bigl)=1, \quad C^\prime_{2} \bigl( (1,0,1,0,0) \bigl)=1, \quad C^\prime_{2} \bigl( (1,0,1,1,1) \bigl) = 1, \quad C^\prime_{2} \bigl( (1,1,1,0,1) \bigl)=1,\\
     C^\prime_{2} \bigl( (1,0,0,0,1) \bigl)=2, \quad C^\prime_{2} \bigl( (\epsilon_1,\epsilon_2,\epsilon_3,\epsilon_4,\epsilon_5) \bigl) = 0 \, \, \text{otherwise}.
\end{multline}
Note that, $||C^\prime_{2}||_{l^{1/2}} = 10$, which agrees with the value in Table \ref{table2}. To define $C^\prime_{n+2}$ from $C^\prime_{n}$ we shall require a few constructions as before. 

\subsection*{Further constructions on free abelian groups.}

Notice that there is a canonical bijection 
\[ \{ 0,1 \}^{n_1+n_2}  \xrightarrow{\sim}  \{ 0,1 \}^{n_1} \times \{ 0, 1 \}^{n_2}. \]
The map is given by \[  \left( \epsilon_1, \ldots, \epsilon_{n_1}, \epsilon_{n_1+1}, \ldots, \epsilon_{n_{1}+n_2 }) \mapsto \bigl( (\epsilon_1, \ldots, \epsilon_{n_1}), (\epsilon_{n_1+1}, \ldots, \epsilon_{n_1+n_2})   \right) \]
This canonical bijection extends to a canonical isomorphism of free abelian groups as follows: \[  \mathbb{Z} \left< \{ 0,1 \}^{n_1+n_2} \right> \xrightarrow{\sim} \mathbb{Z} \left< \{ 0,1 \}^{n_1} \right> \otimes \mathbb{Z} \left< \{ 0,1 \}^{n_2} \right>  \]

\noindent In general, given two maps of abelian groups
\begin{align*}
    f: A_1 \longrightarrow  \mathbb{Z} \left< \{ 0,1 \}^{n_1} \right> &\quad \text{and}\quad 
    g: A_2 \longrightarrow  \mathbb{Z} \left< \{ 0,1 \}^{n_2} \right>.
\end{align*}
It naturally defines a map  
\[ f \otimes g: A_1 \otimes A_2 \longrightarrow \mathbb{Z} \left< \{ 0,1 \}^{n_1} \right> \otimes \mathbb{Z} \left< \{ 0,1 \}^{n_2} \right> \xrightarrow{\sim} \mathbb{Z} \left< \{ 0,1 \}^{n_1+n_2} \right> \]
Similar natural maps exist if we reverse the arrows of $f$, $g$, and $f \otimes g$ in the previous construction. 

\subsection*{The middle contraction.} We define a collection of maps called the middle contraction maps $\{ mK_{n} \}_{n \geq 1}$. The middle contraction map $mK_{n}$ is a map of the following form: 
\[mK_{n}: \mathbb{Z} \left< \{0,1 \}^{2n+1} \right> \longrightarrow \mathbb{Z} \left< \{0,1 \}^{2n} \right>  \]
\begin{align*}
    mK_{n}\bigl( (\epsilon_1, \ldots, \epsilon_{n-1}, 1,1,1, \epsilon_{n+3}, \ldots, \epsilon_{2n+1}) \bigl) &= (\epsilon_1, \ldots, \epsilon_{n-1}, 1,1,\epsilon_{n+3}, \ldots, \epsilon_{2n+1})), \\ 
     mK_{n}\bigl( (\epsilon_1, \ldots, \epsilon_{n-1}, 1,0,0, \epsilon_{n+3}, \ldots, \epsilon_{2n+1}) \bigl) &= (\epsilon_1, \ldots, \epsilon_{n-1}, 1,1,\epsilon_{n+3}, \ldots, \epsilon_{2n+1})), \\
     mK_{n}\bigl( (\epsilon_1, \ldots, \epsilon_{n-1}, 0,0,1, \epsilon_{n+3}, \ldots, \epsilon_{2n+1}) \bigl) &= (\epsilon_1, \ldots, \epsilon_{n-1}, 1,1,\epsilon_{n+3}, \ldots, \epsilon_{2n+1})), \\ 
     mK_{n}\bigl( (\epsilon_1, \ldots, \epsilon_{n-1}, 0,1,1, \epsilon_{n+3}, \ldots, \epsilon_{2n+1}) \bigl) &= (\epsilon_1, \ldots, \epsilon_{n-1}, 0,1,\epsilon_{n+3}, \ldots, \epsilon_{2n+1})), \\
     mK_{n}\bigl( (\epsilon_1, \ldots, \epsilon_{n-1}, 1,1,0, \epsilon_{n+3}, \ldots, \epsilon_{2n+1}) \bigl) &= (\epsilon_1, \ldots, \epsilon_{n-1}, 1,0,\epsilon_{n+3}, \ldots, \epsilon_{2n+1})), \\
     mK_{n}\bigl( (\epsilon_1, \ldots, \epsilon_{n-1}, 0,0,0, \epsilon_{n+3}, \ldots, \epsilon_{2n+1}) \bigl) &= (\epsilon_1, \ldots, \epsilon_{n-1}, 0,1,\epsilon_{n+3}, \ldots, \epsilon_{2n+1})) \\ 
     &\quad + (\epsilon_1, \ldots, \epsilon_{n-1}, 1,0,\epsilon_{n+3}, \ldots, \epsilon_{2n+1})), \\ 
     mK_{n}\bigl( (\epsilon_1, \ldots, \epsilon_{n-1}, 0,1,0, \epsilon_{n+3}, \ldots, \epsilon_{2n+1}) \bigl) &= (\epsilon_1, \ldots, \epsilon_{n-1}, 0,0,\epsilon_{n+3}, \ldots, \epsilon_{2n+1})), \\
     mK_{n}\bigl( (\epsilon_1, \ldots, \epsilon_{n-1}, 1,0,1, \epsilon_{n+3}, \ldots, \epsilon_{2n+1}) \bigl) &= 0. 
\end{align*}
Observe that, we have the general formula 
\[ mK_{n}= Id_{n-1} \otimes  mK_{1} \otimes Id_{n-1}. \]
We need one more map to write down the formula for $C^\prime_{n+2}$.

\subsection*{The central contraction.} We define a collection of maps called the central contraction maps, $\{ cK_{n} \}_{n \geq 1}$. The central contraction map $cK_{n}$ is a map of the following form
\[ cK_{n}: \mathbb{Z} \left< \{ 0,1 \}^{2n} \right> \longrightarrow \mathbb{Z} \left< \{ 0,1 \}^{2n-1} \right>,   \]
\begin{align*}
    cK_{n} \bigl( (\epsilon_1, \ldots, 0, 1, \ldots, \epsilon_{2n}) \bigl) &= (\epsilon_{1}, \ldots, \epsilon_{n-1}, 1 , \epsilon_{n+2}, \ldots, \epsilon_{2n}), \\ 
    cK_{n} \bigl( (\epsilon_1, \ldots, 1, 0, \ldots, \epsilon_{2n}) \bigl) &= (\epsilon_{1}, \ldots, \epsilon_{n-1}, 1 , \epsilon_{n+2}, \ldots, \epsilon_{2n}), \\ 
    cK_{n} \bigl( (\epsilon_1, \ldots, 1, 1, \ldots, \epsilon_{2n}) \bigl) &= (\epsilon_{1}, \ldots, \epsilon_{n-1}, 0 , \epsilon_{n+2}, \ldots, \epsilon_{2n}), \\ 
    cK_{n} \bigl( (\epsilon_1, \ldots, 0, 0, \ldots, \epsilon_{2n}) \bigl) &= 0. 
\end{align*}

\subsection*{The inductive definition of $C^\prime_{n+2}$.} Now we are in the position to write down the formula of $C^\prime_{n+2}$ in terms of $C^\prime_{n}$:
\[C^\prime_{n+2}=   C^\prime_{n} \circ \left( R_{n} \otimes id \otimes R_{n} \right)  \circ cK_{n+1} \circ \left( E_{n+1} \otimes E_{n+1} \right) \circ mK_{n+1}   \circ  K_{2n+5}, \]
where $R_{n}$, $K_{n}$, $E_{n}$ and $E_{n}^{i}$ have been defined in the previous section. 

\section{Proofs of Theorems \ref{thm2}, \ref{thm3} and \ref{thm5}}\label{sec:5}

In this section we prove Theorems \ref{thm2}, \ref{thm3} and \ref{thm5}. We shall rely on the machinery that we developed in a previous paper \cite{PaulSaikia}, which we describe in Subsection \ref{sec:novel}. But first, we recast the problem of enumerating domino tilings into a perfect matching problem. A perfect matching of a graph is a matching in which every vertex of the graph is incident to exactly one edge of the matching. It is easy to see that domino tilings of a region can be identified with perfect matchings of its planar dual graph, the graph that is obtained if we identify each unit square with a vertex and unit squares sharing an edge is identified with an edge. See the bottom left of Figure \ref{fig:ad} for the planar dual graph of an Aztec Diamond of order $4$, and the bottom right of Figure \ref{fig:ad} for the perfect matching corresponding to the tiling shown in the top right of Figure \ref{fig:ad}. So, enumerating domino tilings of Aztec Diamonds is equivalent to calculating perfect matchings of the relevant graphs. With this in mind, we proceed with giving an algebraic structure to our problem.

\subsection{Matching Algebras and Other Results}\label{sec:novel}

Assume $G_1=(V_1,E_1)$ and $G_2=(V_2, E_2)$ are two graphs with a choice of $n$-many distinguished vertices of $V_i$ for $i=1, 2$. Call them $\{x_1, \ldots, x_n \} \subset V_1$ and $\{y_1, \ldots, y_n \} \subset V_2$ respectively. We define the \textit{connected sum} along these distinguished vertices as   \[ G_1 \# G_2 := G_1 \cupdot G_2 / x_i \sim y_i , \forall 1 \leq i \leq n \] 
where $\cupdot$ denotes the disjoint union.  
The vertex set of $G_1 \# G_2$ is defined as $V_1 \cupdot V_2 / x_i \sim y_i$, for all $1 \leq i \leq n $ and the edge set as $E_1 \cupdot E_2 $. 
For any graph $G$, let $M(G)$ denote the number of perfect matchings of $G$. The goal is to understand $M(G_1 \# G_2)$ in terms of $M(G_1)$ and $M(G_2)$.

We define an algebra structure that captures the behavior of  $M(G_1 \# G_2)$ along the boundary. 
Define the \textit{matching algebra} $\mathcal{M}$ over $\mathbb{Z}$ with two generators $y$ and $n$ given by the following relations
\[ \mathcal{M} := \mathbb{Z} \left<y,n \right>/ \left<   yn=yn=n; n^2=0; y^2=y \right>\]
where $G(\epsilon_1, \ldots , \epsilon_{n}) \geq 0$. Here the variable $y$ indicates \textit{yes} or \textit{presence} and the variable $n$ indicates \textit{no} or \textit{absence}. For a graph $G$ with a choice of $n$ distinguished vertices $\{x_1, \ldots, x_n \} \subset V $ we shall assign an element $v_{G} \in \mathcal{M}^{\otimes n} $ of the form \[ v_{G}= \sum_{\epsilon_{i} \in \{ y,n \}} G(\epsilon_1, \ldots ,\epsilon_n) \epsilon_1 \otimes \cdots \otimes \epsilon_n. \] Define an involution $ \bar{\epsilon}$ which switches $y$ to $n$ and $n$ to $y$. That is, as a set we have \[ \{\epsilon, \bar{\epsilon} \}= \{ y,n\}. \] The main result in our previous work \cite{PaulSaikia} was the following theorem.
\begin{theorem}[Theorem 1, \cite{PaulSaikia}]\label{thm:main}
    For graphs $G_{1}= (V_1 , E_1)$ and $G_2=(V_2, E_2)$ with a choice of distinguished vertices as mentioned above, let $v_{G_i}$ denote the element defined as in the previous paragraph. Then  
    \[ M(G_1 \# G_2)= \sum G_{1}(\epsilon_1, \ldots, \epsilon_n ) G_{2}(\bar{\epsilon_1}, \ldots,  \bar{\epsilon_n}). \]
\end{theorem}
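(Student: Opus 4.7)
The plan is to prove the identity by a direct bijection, leveraging the observation that the edge set of $G_1 \# G_2$ is the \emph{disjoint} union $E_1 \sqcup E_2$: identifying vertices does not identify edges, so every edge of a perfect matching of $G_1 \# G_2$ belongs unambiguously to either $E_1$ or $E_2$. Hence any perfect matching $M$ of $G_1 \# G_2$ decomposes uniquely as $M = M_1 \sqcup M_2$ with $M_i \subseteq E_i$, and the whole argument reduces to understanding which pairs $(M_1, M_2)$ of partial matchings of $G_1$ and $G_2$ arise this way.

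Next I would localize the compatibility condition at the interface. Every non-distinguished vertex $v \in V_i$ has all its incident edges living in $E_i$, so $M_i$ is forced to cover every non-distinguished vertex of $G_i$. At each identified vertex $z_j = x_j = y_j$, on the other hand, exactly one edge of $M$ is incident, and this edge belongs to exactly one of $E_1$ or $E_2$; I set $\epsilon_j = y$ if it lies in $E_1$ (so $M_1$ covers $x_j$ while $M_2$ leaves $y_j$ uncovered) and $\epsilon_j = n$ otherwise. Under the convention that $G_i(\epsilon_1, \ldots, \epsilon_n)$ counts the matchings of $G_i$ that cover every non-distinguished vertex and cover the $j$-th distinguished vertex if and only if $\epsilon_j = y$ — this being the convention that makes $v_{G_i}$ an honest element of $\mathcal{M}^{\otimes n}$ — the number of pairs $(M_1, M_2)$ realizing a fixed interface pattern $\epsilon$ is precisely $G_1(\epsilon_1, \ldots, \epsilon_n) \cdot G_2(\bar\epsilon_1, \ldots, \bar\epsilon_n)$. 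The involution $\epsilon \mapsto \bar\epsilon$ enters because a $y$ on the $G_1$ side at interface vertex $j$ forces an $n$ on the $G_2$ side: the two graphs can neither both supply nor both omit the matching edge at a shared vertex.

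Summing over all $2^n$ interface patterns $\epsilon \in \{y,n\}^n$ then yields the claimed formula. To close the loop I would verify the bijection in both directions: the forward map $M \mapsto (M_1, M_2)$ is injective because $M_1 \cup M_2 = M$, and is surjective because any pair of partial matchings with complementary interface patterns glues back into a valid perfect matching of $G_1 \# G_2$, with no vertex double-covered and none left uncovered. I do not anticipate a deep obstacle in the combinatorics; the argument rests on the single clean observation that $E(G_1 \# G_2) = E_1 \sqcup E_2$ together with the book-keeping of which side supplies the interface edges. The main delicacy is to pin down once and for all the convention identifying the formal generator $y \in \mathcal{M}$ with ``covered'' (as opposed to leaving it purely algebraic), and then to check that the defining relations $y^2 = y$, $yn = ny = n$, $n^2 = 0$ of $\mathcal{M}$ are consistent with this combinatorial meaning under the natural pairing that extracts the coefficient of each complementary tensor in $v_{G_1} \otimes v_{G_2}$.
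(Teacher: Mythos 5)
Your bijective argument is correct, but note first that this paper does not actually prove the statement: it is imported verbatim as Theorem 1 of \cite{PaulSaikia}, so there is no in-paper proof to compare against. What the paper does supply is the algebraic mechanism that encodes your bijection: the identity is the special case of the Patching Lemma in which all distinguished vertices are identified, where one reads off the coefficient of $n\otimes\cdots\otimes n$ in $\phi(v_{G_1}\otimes v_{G_2})$ and the relations $y\cdot y=y$, $n\cdot n=0$, $y\cdot n=n$ force exactly the complementary patterns $(\epsilon,\bar\epsilon)$ to survive. Your proof extracts the combinatorial core of that computation directly --- the decomposition $M=M_1\sqcup M_2$ coming from $E(G_1\#G_2)=E_1\sqcup E_2$, the forced covering of non-distinguished vertices, and the complementarity at the interface --- and it is complete as it stands. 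One point of book-keeping deserves correction, and it is exactly the ``delicacy'' you flag at the end: your convention is the reverse of the paper's. Here $G(\epsilon_1,\ldots,\epsilon_n)$ counts perfect matchings of the subgraph in which $v_i$ is \emph{deleted} when $\epsilon_i=y$, so $y$ means \emph{available} (not covered within $G$) and $n$ means already matched; you take $\epsilon_j=y$ to mean the $j$-th distinguished vertex \emph{is} covered. This does not affect the theorem, since $\sum_{\epsilon}G_1(\epsilon)G_2(\bar\epsilon)$ is invariant under globally exchanging $y$ and $n$, but your convention is genuinely incompatible with the algebra $\mathcal{M}$ as defined (under ``$y=$ covered'' one would need $y^2=0$ for double-covering, whereas the relation is $y^2=y$, which instead expresses that two available ends merge into one still-available vertex). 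So your purely bijective proof stands, but the consistency check you defer at the end would fail unless you adopt the paper's orientation of $y$ and $n$.
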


For any graph $G= (V,E)$ with $n$ distinguished vertices $\{v_1, \ldots, v_n \} \subset V$ we associate a non-negative integer $G(\epsilon_1, \ldots, \epsilon_n)$ defined as the number of perfect matchings of the subgraph of $G$ where we delete the $i$-th vertex if $\epsilon_i = y$ or we keep it if $\epsilon_i= n$. Note that we have the following identity
 \[G(n,\ldots, n)= M(G). \]
 Said differently, $\epsilon_i=y$ indicates the situation where the identified vertex $v_i$ is available to match with a vertex in the other graph in the connected sum. Whereas, $\epsilon_i=n$ indicates the situation where the identified vertex $v_i$ is already matched with some other vertex in $G$.  With this understanding, we associate the element $v_{G} \in \mathcal{M}^{\otimes n}$ which is a sum of $2^{n}$ terms. We call this the \textit{state sum expansion} of $G$ associated to $\{v_1, \ldots, v_n \}$.

Let $(G_1; v_1, \ldots, v_{i+j})$ and $(G_2; w_1,  \ldots, w_{j+k})$ be two graphs with $ \left(i+j \right)$ and $ \left( j+k \right)$ distinguished vertices respectively. Assume $v_{G_1}$ and $v_{G_2}$ denote the state sum decomposition of $G_1$ and $G_2$ respectively. We construct a new graph $G$ with $ \left(i+j+k \right)$ distinguished vertices defined as follows: 
\begin{equation}\label{eq:cc}
G := G_1 \cupdot G_2 / v_{i+1} \sim w_1, \cdots , v_{i+j} \sim w_j.
\end{equation}

We want to express $v_{G}$ in terms of $v_{G_{1}}$ and $v_{G_{2}}$. To understand it concretely, we need the notion of internal multiplication. Let $I \subset [n]$ and $J \subset [m]$ with $|I|=|J|$ where we denote the set $\{1,2,\ldots, k\}$ by $[k]$. We can define an \textit{internal multiplication} $\phi_{I,J}$ as follows: 
\[ \phi_{I,J}: \mathcal{M}^{\otimes n} \otimes \mathcal{M}^{\otimes m} \to \mathcal{M}^{\otimes (n+m- |I|)},\] 
where the internal multiplication occurs only on the coordinates of $I$ and $J$. More explicitly, if \[|I|=|J|=k, \quad I = \{ i_1 < \cdots < i_k \}, \quad \text{and} \quad J= \{ j_1 < \cdots < j_k \}.\] Take two simple tensors $ \left( A \cdot \epsilon_{1} \otimes \cdots \otimes \epsilon_{n} \right) \in \mathcal{M}^{\otimes n}$ and $\left( B \cdot \eta_{1} \otimes \cdots \otimes \eta_{m}\right) \in \mathcal{M}^{\otimes m}$, where $\epsilon_i, \eta_{j} \in \{y, n\}$. Then,
\begin{multline*}
    \phi_{I,J} \left( (A \cdot \epsilon_1 \otimes \cdots \otimes \epsilon_n) \otimes (B \cdot \eta_1 \otimes \cdots \otimes \eta_{m}) \right)\\:= \left( A \cdot B \right) \epsilon_1 \otimes \cdots \otimes \epsilon_{i_1-1}\otimes (\epsilon_{i_1} \cdot \eta_{j_1}) \otimes \cdots \otimes (\epsilon_{i_{k}} \cdot \eta_{j_{k}}) \otimes \cdots \otimes \epsilon_{n} \otimes \eta_1 \otimes \cdots \otimes \widehat{\eta_{j_1}} \otimes \cdots \otimes \widehat{\eta_{j_k}} \otimes \cdots \otimes \eta_{m},
\end{multline*}

\noindent where~~$\widehat{\cdot}$~~denotes the absence of the variable. For general elements $W \in \mathcal{M}^{\otimes n}$ and $V \in \mathcal{M}^{\otimes m}$ , we extend the definition of internal multiplication 
\(\phi_{I,J} ( W \otimes W ) \) by distributivity. We give an example of the internal multiplication:  for $n=3 , m=4$ and  for $I= \{ 2,3 \},  J=\{ 1,4 \}$:
\[ \phi_{I,J} \left( ( \epsilon_1 \otimes \epsilon_2 \otimes \epsilon_3) \otimes (\eta_1 \otimes \eta_2 \otimes \eta_3 \otimes \eta_4) \right) = \epsilon_1 \otimes (\epsilon_2 \cdot \eta_1) \otimes (\epsilon_3 \cdot \eta_4) \otimes \eta_2 \otimes \eta_3. \]
If $I,J$ are well understood we shall remove them from $\phi$.

Next we state a fundamental lemma which will be useful in the next subsection.
\begin{lemma}[The Patching Lemma, Lemma 3 \cite{PaulSaikia}]\label{lem3}
For $I=\{i+1, \ldots, i+j \} \subset [i+j]$ and $J= \{1, \ldots, j \} \subset [j+k]$, the state sum decomposition of the graph $G$, where $G= (V,E)$ with $n$ distinguished vertices $\{v_1, \ldots, v_n \} \subset V$ is given by  \[v_{G}= \phi_{I,J} (v_{G_1} \otimes v_{G_2}),\] where $G_1$ and $G_2$ are as described in \eqref{eq:cc}.  
\end{lemma}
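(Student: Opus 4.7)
My plan is to establish $v_G = \phi_{I,J}(v_{G_1} \otimes v_{G_2})$ by comparing coefficients of each simple tensor $\epsilon_1 \otimes \cdots \otimes \epsilon_{i+j+k}$ indexed by $\epsilon \in \{y,n\}^{i+j+k}$. The coefficient on the left is, by definition of the state sum expansion, $G(\epsilon_1, \ldots, \epsilon_{i+j+k})$, the number of perfect matchings of the subgraph of $G$ obtained by deleting the distinguished vertices marked $y$ and keeping those marked $n$. The whole argument therefore reduces to showing that the coefficient on the right-hand side computes this same number.

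The first step is to unpack $\phi_{I,J}$. For a simple tensor $\alpha_1 \otimes \cdots \otimes \alpha_{i+j}$ of $v_{G_1}$ and $\beta_1 \otimes \cdots \otimes \beta_{j+k}$ of $v_{G_2}$ to contribute to the $\epsilon$-coefficient, one needs $\alpha_s = \epsilon_s$ for $1 \le s \le i$, $\beta_{j+t} = \epsilon_{i+j+t}$ for $1 \le t \le k$, and $\alpha_{i+l}\cdot\beta_l = \epsilon_{i+l}$ in $\mathcal{M}$ for $1 \le l \le j$. Using the defining relations $yy = y$, $yn = ny = n$, and $nn = 0$, the last condition forces $(\alpha_{i+l},\beta_l) = (y,y)$ whenever $\epsilon_{i+l} = y$, and admits the two alternatives $(n,y)$ and $(y,n)$ whenever $\epsilon_{i+l} = n$, while the forbidden pair $(n,n)$ is killed. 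Hence the coefficient on the right is $\sum_{(\alpha,\beta)} G_1(\alpha)\,G_2(\beta)$, summed over all such compatible pairs.

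The next step, and the heart of the proof, is to exhibit a bijection between perfect matchings of the restricted $G$ and compatible pairs $(M_1, M_2)$ where $M_p$ is a perfect matching of the subgraph of $G_p$ selected by the indicator $\alpha$ (respectively $\beta$). In the forward direction, the decomposition $E(G) = E_1 \cupdot E_2$ splits a matching $M$ as $M = M_1 \cup M_2$ with $M_p \subseteq E_p$. For each glued vertex $u_l$ that is kept in the restricted $G$, the unique edge of $M$ incident to $u_l$ lies in exactly one $E_p$; this dictates whether the pair at position $l$ is $(n,y)$ or $(y,n)$. For glued vertices with $\epsilon_{i+l}=y$, both sides already see the vertex as absent and we take $(y,y)$. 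The inverse is transparent: glue any compatible $(M_1, M_2)$ to form $M_1 \cup M_2$ in $G$; compatibility forces every kept vertex of $G$ to have degree exactly one, producing a perfect matching.

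I expect the main technical hurdle to be the careful bookkeeping at the glued vertices, namely verifying that the $G_1$/$G_2$ state labels really reflect, and are forced by, the local structure of $M$ at each $u_l$. The conceptual point is that the two non-trivial matching-algebra relations encode exactly the meaningful local possibilities at a glued vertex: the relation $yn = ny = n$ records the binary choice of which side matches the vertex internally (with the other side recording it as removed), while $n^2 = 0$ rules out the geometrically impossible configuration in which both $M_1$ and $M_2$ try to match it. Once this local analysis is in place for a single glued vertex, the global statement follows either by a straightforward induction on $j$ or by invoking the local identity coordinatewise inside the definition of $\phi_{I,J}$, and summing $G_1(\alpha)\,G_2(\beta)$ over compatible pairs recovers $G(\epsilon)$ as required.
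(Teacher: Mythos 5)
Your argument is correct: comparing coefficients of each simple tensor, unpacking the relations $y\cdot y=y$, $y\cdot n=n\cdot y=n$, $n\cdot n=0$ at the glued coordinates, and exhibiting the bijection $M\leftrightarrow(M_1,M_2)$ via the edge partition $E=E_1\cupdot E_2$ is exactly the natural proof of the Patching Lemma. Note, however, that this paper does not prove the lemma at all --- it is imported verbatim as Lemma~3 of \cite{PaulSaikia} --- so there is no in-paper proof to compare against; your write-up supplies the standard argument that the cited reference is expected to contain, and the local analysis at a single glued vertex (in particular that $n\cdot n=0$ kills the degree-two configuration) is the only point requiring care, which you handle correctly.
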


\begin{figure}[htb!]\label{fig1_factor}
\includegraphics[scale=1]{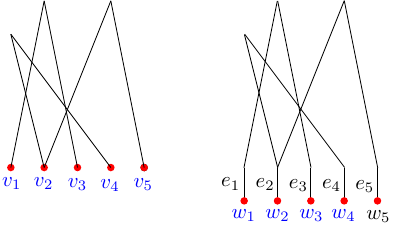}
\caption{Example of $1$-factor addition.}
\end{figure}

\begin{remark}
    In our application, if the distinguished vertices are well understood we use the dot symbol `$\cdot$' to indicate the interior multiplication.
\end{remark}

Given a graph $G=(V,E)$ with a choice of distinguished vertices $\{ v_1, \ldots, v_n \} \subset V $ we can construct a new graph called \textit{$1$-factor addition of $G$} where we add a separate pendent edge $e_i$ for each distinguished vertex $v_i$. We label the degree $1$ vertex corresponding to $v_i$ as $w_i$. For example, see Figure \ref{fig1_factor} where the distinguished vertices are colored red. If  \[G^\prime= \left( V \cup \{w_1, \ldots, w_n \}, E \cup \{e_1, \ldots, e_n \} \right)\]denotes the $1$-factor addition of $G$ with the choice of distinguished vertices as $\{w_1, \ldots, w_n \}$ then we want to understand how the state sum decomposition  $v_{G}$ and $v_{G^\prime}$ related. The answer is provided in the next lemma.
\begin{lemma}[Transfer lemma]\label{lem4}
If $v_{G}= \sum\limits_{\epsilon_i \in \{ y,n \}} G(\epsilon_1, \ldots, \epsilon_n) \epsilon_1 \otimes \epsilon_{n}$ and $v_{G^\prime}= \sum\limits_{\epsilon_i \in \{ y,n \}} G^\prime(\epsilon_1, \ldots, \epsilon_n) \epsilon_1 \otimes \epsilon_{n}$ then the following holds \[ G(\epsilon_1, \ldots , \epsilon_n)= G^\prime(\bar{\epsilon_1}, \ldots, \bar{\epsilon_n}). \]
\end{lemma}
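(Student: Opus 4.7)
The plan is to prove the identity via a direct bijection between the two sets of perfect matchings being counted. The crucial structural observation is that in the $1$-factor addition $G'$, each newly added vertex $w_i$ has degree one: its sole neighbor is $v_i$. Consequently, whenever $w_i$ is retained in a subgraph on which we take a perfect matching, the pendant edge $e_i = \{v_i, w_i\}$ must belong to that matching, which in turn forces $v_i$ to be consumed by $e_i$ rather than by any edge of $E$.

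First I would unfold the two counts directly from the definition given just before Theorem \ref{thm:main}. The quantity $G(\epsilon_1, \ldots, \epsilon_n)$ enumerates perfect matchings of the subgraph of $G$ obtained by deleting exactly those distinguished vertices $v_i$ for which $\epsilon_i = y$. Applying the same definition to $G'$ with its distinguished vertices $\{w_1, \ldots, w_n\}$, the quantity $G'(\bar{\epsilon}_1, \ldots, \bar{\epsilon}_n)$ enumerates perfect matchings of the subgraph of $G'$ obtained by deleting those $w_i$ for which $\bar{\epsilon}_i = y$, i.e.\ the $w_i$ with $\epsilon_i = n$; equivalently, $w_i$ is retained exactly when $\epsilon_i = y$.

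Next I would construct the bijection explicitly. Given any perfect matching $M'$ of this retained subgraph of $G'$, the degree-one observation forces $e_i \in M'$ for every index $i$ with $\epsilon_i = y$. Removing these forced edges together with their endpoints $v_i$ and $w_i$ yields a perfect matching $M$ whose vertex set is $V \setminus \{v_i : \epsilon_i = y\}$ and whose edges all lie in $E$; this is precisely a perfect matching of the subgraph of $G$ counted by $G(\epsilon_1, \ldots, \epsilon_n)$. Conversely, from any such perfect matching $M$ of $G$ one recovers $M'$ by adjoining the edges $e_i$ for all $i$ with $\epsilon_i = y$; this is well defined because for such $i$ the vertex $w_i$ is retained and, having no other neighbor available, must match $v_i$. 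The two operations are manifestly mutually inverse, so the counts agree.

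The main obstacle is not conceptual but notational: one has to track carefully how the swap $\epsilon_i \leftrightarrow \bar{\epsilon}_i$ interacts with the fact that "retained" in $G'$ refers to the leaves $w_i$, while "retained" in $G$ refers to the originals $v_i$. Once one records the rule that $w_i$ is retained in $G'$ if and only if $v_i$ is deleted in $G$, the bijection above runs through mechanically, yielding the desired equality $G(\epsilon_1, \ldots, \epsilon_n) = G'(\bar{\epsilon}_1, \ldots, \bar{\epsilon}_n)$.
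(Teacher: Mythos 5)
Your proof is correct, but it takes a different route from the paper's. The paper proves the Transfer Lemma inside its algebraic framework: it views the addition of each pendant edge $e_i$ as a connected sum of $G$ with the line graph $L_2$ (whose state sum is $y\otimes y + n\otimes n$), computes the internal multiplications $y\cdot(y\otimes y+n\otimes n)=y\otimes y+n\otimes n$ and $n\cdot(y\otimes y+n\otimes n)=n\otimes y$ in the matching algebra, discards the terms in which the now non-distinguished vertex $v_i$ is left unmatched, and invokes the Patching Lemma (Lemma \ref{lem3}) to assemble the conclusion. You instead unfold the definition of $G(\epsilon_1,\ldots,\epsilon_n)$ and $G'(\bar{\epsilon}_1,\ldots,\bar{\epsilon}_n)$ as counts of perfect matchings of vertex-deleted subgraphs and exhibit an explicit bijection, driven by the observation that a retained degree-one leaf $w_i$ forces $e_i$ into the matching and hence consumes $v_i$, while a deleted $w_i$ forces $v_i$ to be matched inside $E$. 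The two arguments encode the same combinatorial fact --- your forced-edge observation is exactly what the algebraic relations $y^2=y$, $yn=n$, $n^2=0$ are bookkeeping for --- but yours is self-contained and more elementary, needing neither the matching algebra nor Lemma \ref{lem3}, whereas the paper's version is shorter given the machinery already in place and keeps the lemma uniform with the style of the surrounding proofs. You also handle the index bookkeeping (retention of $w_i$ in $G'$ versus deletion of $v_i$ in $G$) cleanly, which is the only place such an argument could go wrong.
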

\begin{proof}
    The set of distinguished vertices $\{ v_1, \ldots, v_{n} \}$ in $G$ no longer remains distinguished vertices in the $1$-factor addition $G^{'}$. Adding an edge $e_{i}$ corresponds to taking a connected sum of $G$ with $L_{2}$ where the vertex $v_{i}$ is identified with one of the boundary vertex of $L_{2}$. The state sum decomposition of $L_2$ is given by 
    \[ v_{L_2}= y \otimes y + n \otimes n\]
    Finally,
    \begin{align*}
        y \cdot(y\otimes y + n \otimes n) &= y^{2} \otimes y + (y\cdot n) \otimes n = y \otimes y + n \otimes n, \\ 
        n \cdot(y\otimes y + n \otimes n) &= (n \cdot y) \otimes y + n^{2} \otimes n =  n \otimes y.
    \end{align*}
    Thus, for simple tensor $\epsilon_1 \otimes \cdots \otimes \epsilon_{n}$ appearing in the state sum decomposition of $G$, if $\epsilon_{i}=y$  then it transfers to $n$ in $G^{'}$. Similarly, $\epsilon_{i}=n$ transfers to $y$ in $G^{'}$. This is because, in the state sum decomposition of a graph the non-distinguished vertices must be already matched. Finally, by Lemma \ref{lem3}, we conclude our proof. 
\end{proof}
We define an element $\mathcal{F}_{n} \in \mathcal{M}^{\otimes n}$ as given by the stipulation that $n$ must occur in pairs in its simple tensor decomposition. Here are some examples
\begin{align*}
    \mathcal{F}_{1} &= y + n, \\
    \mathcal{F}_{2} &= y \otimes y + n \otimes n ,\\ 
    \mathcal{F}_{3} &= y \otimes y \otimes y + y \otimes n \otimes n + n \otimes n \otimes y, \\
    \mathcal{F}_{4} &= y \otimes y \otimes y \otimes y + n \otimes n \otimes y \otimes y + y \otimes n \otimes n \otimes y + y \otimes y \otimes n \otimes n + n \otimes n \otimes n \otimes n. 
\end{align*}
Note that the number of simple tensors in $\mathcal{F}_{n}$ (that is, the number of terms in $\mathcal{F}_n$) is exactly equal to the Fibonacci number $F_{n}$, where we define the sequence as
\[
F_1=1, \quad F_2=2, \quad F_n=F_{n-1}+F_{n-2} \quad \text{for all $n\geq 3$}.
\]
\begin{lemma}
    The state sum decomposition of the line graph $L_{n}$ with $n$ vertices where all of the vertices are also distinguished vertices is given by $\mathcal{F}_{n}$.
\end{lemma}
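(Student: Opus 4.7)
The plan is to show that both $v_{L_n}$ and $\mathcal{F}_n$ satisfy the same two-term recursion
\[
v_{L_n} \;=\; v_{L_{n-1}} \otimes y \;+\; v_{L_{n-2}} \otimes n \otimes n,
\]
and then to match the base cases; the result then follows by induction on $n$. The key combinatorial input is a direct description of the coefficients $L_n(\epsilon_1,\ldots,\epsilon_n)$: deleting from the path $L_n$ the vertices $v_i$ with $\epsilon_i = y$ leaves a disjoint union of subpaths, one for each maximal run of consecutive $n$-labels. Since a path on $k$ vertices has exactly one perfect matching when $k$ is even and none when $k$ is odd, the coefficient equals $1$ precisely when every maximal $n$-run of $\epsilon$ has even length, and $0$ otherwise. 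This is exactly the indicator pattern defining $\mathcal{F}_n$ (the ``$n$'s occur in consecutive pairs'' stipulation visible in the listed examples), so proving the lemma reduces to verifying that $\mathcal{F}_n$ really does enumerate those sequences.

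For the inductive step on the state-sum side, I would split the sum defining $v_{L_n}$ according to the value of the last coordinate $\epsilon_n$. If $\epsilon_n = y$, the last vertex is removed and the surviving subgraph is $L_{n-1}$ on $\{v_1,\ldots,v_{n-1}\}$; collecting these contributions gives $v_{L_{n-1}} \otimes y$. If $\epsilon_n = n$, the vertex $v_n$ is kept and must be matched, but its only neighbor in $L_n$ is $v_{n-1}$; hence $\epsilon_{n-1} = n$ is forced, the matching must contain the edge $\{v_{n-1},v_n\}$, and the remaining problem is the state sum for $L_{n-2}$ on $\{v_1,\ldots,v_{n-2}\}$. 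These contributions together give $v_{L_{n-2}} \otimes n \otimes n$. A fully parallel case analysis on the sequences appearing in $\mathcal{F}_n$ (last letter is $y$, or the sequence ends in a closing $nn$) yields the same recursion, after which induction closes the proof.

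The argument is elementary, so there is no real obstacle; the only care point is pinning down the base case correctly so that the two recursions anchor to identical data (in particular the parity condition forces $v_{L_1} = y$). I would note that the Patching Lemma (Lemma \ref{lem3}) does not apply directly here, since keeping all $n$ vertices as distinguished forbids gluing $L_n$ as a connected sum of shorter paths along an internal vertex; this is why I prefer the direct vertex-by-vertex analysis above.
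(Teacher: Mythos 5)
Your proof is correct, and it takes a genuinely different route from the paper's. The paper also runs an induction on the recursion $\mathcal{F}_{n}=\mathcal{F}_{n-1}\otimes y+\mathcal{F}_{n-2}\otimes n\otimes n$, but it realizes $L_{n+1}$ as the connected sum of $L_{n}$ with $L_{2}$ along one endpoint and obtains $v_{L_{n+1}}$ from the Patching Lemma (Lemma \ref{lem3}) by computing the internal product $\mathcal{F}_{n}\cdot(y\otimes y+n\otimes n)=\mathcal{F}_{n+1}$ inside the matching algebra; you instead establish the same recursion for $v_{L_n}$ by elementary case analysis on the last coordinate (delete $v_n$ when $\epsilon_n=y$; force the edge $\{v_{n-1},v_n\}$ and hence $\epsilon_{n-1}=n$ when $\epsilon_n=n$). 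Your run-parity description of the coefficients --- $L_n(\epsilon_1,\ldots,\epsilon_n)=1$ exactly when every maximal run of $n$'s has even length, and $0$ otherwise --- is a genuine bonus: it identifies $v_{L_n}$ with $\mathcal{F}_n$ in closed form, with no induction needed, whereas the paper's argument stays entirely inside the algebra. Your insistence on anchoring the base case at $v_{L_1}=y$ is also right: the paper's displayed $\mathcal{F}_1=y+n$ is inconsistent with its own ``$n$ must occur in pairs'' stipulation, with the Fibonacci count $F_1=1$, and with the fact that a lone kept vertex admits no perfect matching; the paper's proof in any case multiplies by $\mathcal{F}_2=y\otimes y+n\otimes n$ rather than by the $\mathcal{F}_1$ named in the text, so nothing breaks. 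One small correction to your closing remark: the Patching Lemma does apply here. In the connected sum of Lemma \ref{lem3} the identified vertices remain distinguished in the glued graph --- the internal multiplication $\phi_{I,J}$ retains the glued coordinate and multiplies the two labels in $\mathcal{M}$ --- so gluing $L_2$ onto an endpoint of $L_n$ with all vertices distinguished is exactly the situation the lemma covers, and is precisely how the paper argues. Your direct vertex-by-vertex argument is a valid and somewhat more self-contained alternative, not a forced workaround.
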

\begin{proof}
  We use induction on $n$ and Lemma \ref{lem3}. Note that the definition of $\mathcal{F}_{n}$ implies the following for all $n\geq 2$\[
 \mathcal{F}_{n}= \mathcal{F}_{n-1} \otimes y + \mathcal{F}_{n-2} \otimes n \otimes n. 
  \]
  Now by Lemma \ref{lem3}, the state sum decomposition of the line graph $L_{n+1}$ is given by the internal multiplication of $\mathcal{F}_{n}$ with $\mathcal{F}_{1}$ where we need to multiply the last coordinate of $\mathcal{F}_{n}$ with the first coordinate of $\mathcal{F}_{1}$ via the matching algebra $\mathcal{M}$. Thus, we have
  \begin{align*}
      \mathcal{F}_{n}\cdot \mathcal{F}_{1}&=  \left( \mathcal{F}_{n-1} \otimes y + \mathcal{F}_{n-2} \otimes n \otimes n \right) \cdot ( y \otimes y + n \otimes n) \\
      &= \mathcal{F}_{n-1} \otimes (y^2) \otimes y + \mathcal{F}_{n-1} \otimes (y \cdot n) \otimes n + \mathcal{F}_{n-2} \otimes n \otimes (n \cdot y) \otimes y + \mathcal{F}_{n-2} \otimes n \otimes (n \cdot n) \otimes n\\
      &= \mathcal{F}_{n-1} \otimes y \otimes y + \mathcal{F}_{n-1} \otimes n \otimes n + \mathcal{F}_{n-2} \otimes n \otimes n \otimes y \\
      &= (\mathcal{F}_{n-1} \otimes y + \mathcal{F}_{n-2} \otimes n \otimes n) \otimes y + \mathcal{F}_{n-1} \otimes n \otimes n \\
      &= \mathcal{F}_{n} \otimes y + \mathcal{F}_{n-1} \otimes n \otimes n \\
      &= \mathcal{F}_{n+1}.
  \end{align*}
  This completes the proof.
\end{proof}

Armed with this lemma, we can finally prove our results. 

\subsection{Proof of Theorems \ref{thm2} and \ref{thm3}}

Observe that finding the number of diagonally symmetric, that is $\left< t \right>$- invariant domino tilings of $\ad(n)$ amounts to computing the state sum decomposition of the class of graphs shown in Figure \ref{fig:1}. We call this collection of graphs $ \{ \mc_{n} \}_{n \geq 1}$, for $n=1, 2, 3$ and $4$ the graphs are as shown in Figure \ref{fig:1}. Here and elsewhere we only mark the distinguished vertices (in red) that we will use; all other vertices in the graphs are unmarked. We prove the following theorem.

\begin{figure}[htb!]\label{fig:1}
\includegraphics[scale=1]{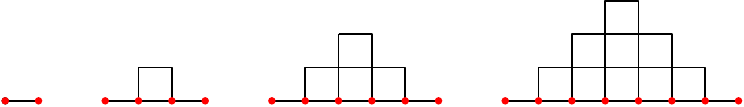}
\caption{Dual Graph associated with Diagonally Symmetric Domino Tilings of Aztec Diamonds.}
\end{figure}

\begin{theorem}\label{thm6}
    The sum of the coefficients in the state sum decomposition of $\mc_{n}$ enumerates the $\left<  t \right>$- invariant domino tilings of $\ad(n)$. The sum of the squares of the coefficients in the state sum decomposition of $\mc_{n}$ enumerates the total number of domino tilings of $\ad(n)$. 
\end{theorem}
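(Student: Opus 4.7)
The plan is to realize the planar dual graph of $\ad(n)$ as the gluing of two mirror copies of $\mc_n$ along the vertical axis of symmetry and to track matchings through their behavior on the $2n$ distinguished boundary vertices. Concretely, I would identify $\mc_n$ with the right half of the dual graph cut along the vertical axis, with the $2n$ cells immediately adjacent to the axis designated as distinguished vertices; this matches the length-$2n$ binary sequences indexing $C_n$. The structural claim is that the full dual graph of $\ad(n)$ is recovered from $\mc_n$ and its mirror image by inserting, for each $i$, one bridge edge between the $i$-th distinguished vertex of each copy. Since this is edge insertion rather than vertex identification, Theorem \ref{thm:main} does not apply directly, so I would model each bridge as a length-$2$ line graph $L_2$ sandwiched between the two halves via two successive connected sums, and then combine the state sums using the Patching Lemma (Lemma \ref{lem3}). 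The state sum $v_{L_2}=y\otimes y + n\otimes n$ correctly enforces that each bridge edge is either matched (state $y$ on both sides) or unused (state $n$ on both sides), exactly as a genuine edge of $\ad(n)$ would.

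With this setup, both claims follow by careful bookkeeping. For the $l^2$ claim, fix a state $\epsilon\in\{y,n\}^{2n}$ recording which bridges are matched; the matching completions inside $\mc_n$ are counted by $\mc_n(\epsilon)$, and by mirror symmetry combined with a consistent top-to-bottom labeling of the axis-adjacent vertices, the completions inside the mirror copy are also counted by $\mc_n(\epsilon)$. The $L_2$ factor contributes $1$ precisely when the two sides agree on $\epsilon$, so summing over $\epsilon$ yields $M(\ad(n))=\sum_\epsilon \mc_n(\epsilon)^2$, the sum of squares of coefficients. For the $l^1$ claim, a $\langle t\rangle$-invariant matching is uniquely determined by its restriction to one half, since the other half is then forced by the mirror reflection. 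Each bridge edge is setwise $t$-fixed, so its matching state is automatically the same on both sides, imposing no additional constraint. Consequently every matching of $\mc_n$ in any state $\epsilon$ extends uniquely to a $t$-invariant matching of $\ad(n)$, giving the total count $\sum_\epsilon \mc_n(\epsilon)$, the sum of coefficients.

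The main obstacle will be the careful bookkeeping that ensures the mirror orientation and labeling yield equality rather than a reversal of coordinates, so that the mirror copy of $\mc_n$ evaluated at $\epsilon$ really agrees with $\mc_n(\epsilon)$ on the nose. This is ultimately the reason the reverse map $R_{2n-2}$ appears in the inductive formula for $C_n$, and reconciling it with the $L_2$-based modeling of bridge edges (which implicitly invokes Theorem \ref{thm:main}'s $\epsilon\mapsto\bar\epsilon$ convention at each identification) is where the details must be checked most carefully.
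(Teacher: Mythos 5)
Your proposal is correct and follows essentially the same route as the paper: decompose the dual graph of $\ad(n)$ into two mirror copies of $\mc_{n}$ joined by $2n$ axis-crossing edges, count symmetric matchings by unique extension from one half (the $l^1$ claim), and count all matchings via Theorem \ref{thm:main} (the $l^2$ claim). Your modelling of each bridge edge as an $L_2$ glued in by the Patching Lemma is just the Transfer Lemma (Lemma \ref{lem4}) unwound, which is exactly what the paper invokes, and your closing remark about the orientation/labelling of the mirror copy correctly identifies the one point the paper's terse proof leaves implicit.
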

\begin{proof}
  Taking the graphical dual of $\ad(n)$ we note that the number of $\left<  t \right>$- invariant domino tilings of $\ad(n)$ is the same as the number of perfect matchings symmetric with respect to the central line. For $n=4$, refer to Figure \ref{fig:2}.
    
    \begin{figure}[htb!]\label{fig:2}
        \includegraphics[scale=1]{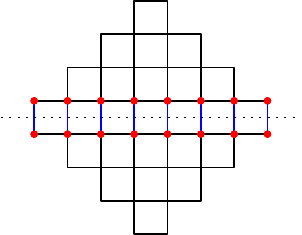}
        \caption{Dual graph of $\ad(n)$ and its relation to $\langle t \rangle$-invariant tilings.}
    \end{figure}
    
    Note that any matching of the top $\mc_{n}$ graph corresponding to the defect $ \epsilon_1 \otimes \cdots \otimes \epsilon_{2n}$ extends uniquely to the bottom $\mc_{n}$ with the same defect $ \epsilon_1 \otimes \cdots \otimes \epsilon_{2n}$. For the centrally symmetric perfect matchings of $\ad(n)$ the bottom $\mc_{n}$ is completely determined by the choice of top $\mc_{n}$.  Thus,  we have the equality.
    
    For the second part, this follows from Lemma \ref{lem4} and Theorem \ref{thm:main} above. 
\end{proof}

Next, we relate the state sum decomposition of $\mc_{n}$ with the function $C_n$ discussed in Section \ref{sec:two}. We make the following identifications 
\begin{align*}
    y & \longleftrightarrow \mathcal{V}(y)= 1, \\
    n & \longleftrightarrow \mathcal{V}(n)= 0, \\
    \left( \epsilon_1 \otimes \cdots \otimes \epsilon_n \right) & \longleftrightarrow \left( \mathcal{V}(\epsilon_1), \ldots, \mathcal{V}(\epsilon_n) \right).
\end{align*}
Here, $\mathcal{V}$ stands for \textit{valuation}. The valuation map $\mathcal{V}$, allows us to transforms the simple tensor $\epsilon_1 \otimes \epsilon_n$ where $\epsilon_i \in \{ y, n \}$ to a sequence in $\{ 0, 1 \}^{n}$. The relationship between $\mc_{n}$ and $C_{n}$ is explicitly described by the following lemma. 


\begin{lemma}\label{lem5}
    The coefficient of $\epsilon_1 \otimes \cdots  \otimes \epsilon_{2n}$ in the state sum decomposition of $\mc_n$ is exactly equal to $C_{n} \bigl( (\mathcal{V}(\epsilon_1), \ldots, \mathcal{V}(\epsilon_{2n}) ) \bigl)$.
\end{lemma}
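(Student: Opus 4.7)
The plan is to prove the lemma by induction on $n$, matching the inductive definition $C_n = C_{n-1} \circ R_{2n-2} \circ E_{2n-2} \circ K_{2n}$ with a geometric peeling of $\mc_n$ along its outer boundary. For the base case $n=1$, one directly computes that the small graph $\mc_1$ in Figure \ref{fig:1} has state sum decomposition $v_{\mc_1} = y\otimes y + n\otimes n$; under the valuation $\mathcal{V}$ the two surviving monomials correspond to $(1,1)$ and $(0,0)$, matching $C_1(1,1) = C_1(0,0) = 1$ and $C_1(0,1) = C_1(1,0) = 0$.

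For the inductive step, assume the statement for $n-1$. The key geometric observation, visible from Figure \ref{fig:1}, is that $\mc_n$ decomposes as a connected sum $\mc_{n-1} \,\#\, P_n$, where $P_n$ is a boundary peel subgraph that shares $2n-2$ identified vertices with $\mc_{n-1}$ (the previous boundary) and carries the $2n$ distinguished vertices forming the new outer boundary of $\mc_n$. By the Patching Lemma (Lemma \ref{lem3}),
\[ v_{\mc_n} \;=\; \phi\bigl(v_{\mc_{n-1}} \otimes v_{P_n}\bigr), \]
so it suffices to identify $v_{P_n}$ with the composite operator $R_{2n-2}\circ E_{2n-2} \circ K_{2n}$ read as a map from the outer to the inner boundary indicators.

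I would next split $P_n$ itself into three successive layers, each realising one of the three maps. The outermost layer consists of the two corner cells at the ends of the boundary; a case analysis on the $16$ possible configurations of the four extremal indicators $(\epsilon_1,\epsilon_2,\epsilon_{2n-1},\epsilon_{2n})$ reproduces exactly the $16$-case prescription for $K_{2n}$, with the eight zero cases being precisely the corner configurations admitting no compatible partial matching. The middle layer is the strip of interior boundary pairs; at each position $i$ its local state sum yields one basic expansion $E_{2n-2}^{i}$, and since the pairs at distinct positions interact only through disjoint distinguished vertices, these compose to $E_{2n-2}$. The innermost layer is the $1$-factor addition attaching $P_n$ to $\mc_{n-1}$ via pendant edges; by the Transfer Lemma (Lemma \ref{lem4}) this flips every boundary indicator, producing exactly $R_{2n-2}$.

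The main obstacle is the corner verification: the intricate sixteen-case formula for $K_{2n}$ must be matched entry by entry with the local matching possibilities of the two corner gadgets in $P_n$, including the correct reassignment of the two surviving indicators at the new boundary. Once this case analysis is completed and combined with the interior expansions and the transfer step, the inductive hypothesis $v_{\mc_{n-1}}(\delta) = C_{n-1}(\mathcal{V}(\delta))$ turns $\phi(v_{\mc_{n-1}} \otimes v_{P_n})$ into $C_{n-1}\circ R_{2n-2} \circ E_{2n-2} \circ K_{2n}$ evaluated at $(\mathcal{V}(\epsilon_1),\ldots,\mathcal{V}(\epsilon_{2n}))$, which is $C_n$ by definition, completing the induction.
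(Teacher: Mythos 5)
Your proposal is correct and follows essentially the same route as the paper: induction on $n$, peeling $\mc_n$ down to $\mc_{n-1}$ in three layers that realise $K_{2n}$ (the two ends), $E_{2n-2}$ (the interior strip, via interior multiplication by the Fibonacci element $\mathcal{F}_{2n-2}=(\mathcal{F}_2)^{2n-3}$), and $R_{2n-2}$ (the pendant edges, via the Transfer Lemma), all glued by the Patching Lemma. The only cosmetic difference is that the paper runs the construction forwards (building $\mc_n$ from $\mc_{n-1}$ in Steps 1--4) and handles each end's four cases separately rather than as a single sixteen-case corner analysis.
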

\begin{proof}
    We use induction on $n$. For $n=1$ and $n=2$ this relationship can easily be verified. We assume the result is true for $n-1$. We now build $\mc_{n}$ from $\mc_{n-1}$ through the following steps as illustrated for $n-1=3$ in Figures \ref{fig:setp-1} -- \ref{fig:setp-4}.
    \begin{itemize}
        \item[Step 1] We start with the graph $\mc_{n-1}$.
        \item[Step 2] We append pendant edges in $\mc_{n-1}$ along the distinguished vertices (see Figure \ref{fig:setp-2}).
        \item[Step 3] We take the connected sum of $\mc_{n-1}$ and $L_{2n-2}$ along the new distinguished vertices (see Figure \ref{fig:setp-3}).
        \item[Step 4] We take the connected sum of $L_1$ from the right and from the left of the graph obtained in the previous step (see Figure \ref{fig:setp-4}).
    \end{itemize}
    
    \begin{figure}[H]
        \includegraphics[]{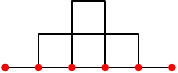} 
        \caption{Step 1: We start with $\mc_{n-1}$ (here $n=4$).}
        \label{fig:setp-1}
    \end{figure}

       \begin{figure}[H]
       \centering
       \includegraphics[]{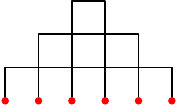}
    \caption{Step 2: We append pendant edges in $\mc_{n-1}$ (here $n=4$).}
    \label{fig:setp-2}
   \end{figure}

   \begin{figure}[H]
       \centering
       \includegraphics[]{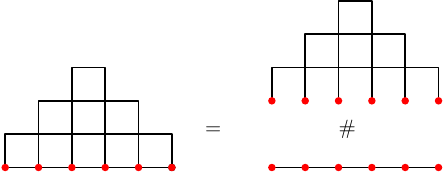}
      \caption{Step 3: We take the connected sum of $1-factor \, \, addition \, \, of \mc_{n-1}$ and $L_{2n-2}$ (here $n=4$).}
      \label{fig:setp-3}
   \end{figure}

     \begin{figure}[H]
       \centering
       \includegraphics[]{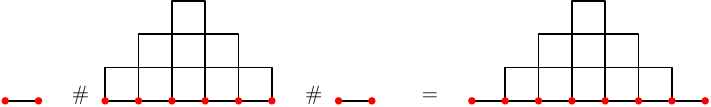}
      \caption{Step 4: We take the connected sum of $L_1$ from the right and from the left of the graph obtained in Step 3.}
      \label{fig:setp-4}
   \end{figure}

Note that the transition of the state sum decomposition from Step 1 to Step 2 is given by Lemma \ref{lem4}. The reverse map $R_{n}$ defined in section $2$ precisely captures the change.

Using Lemma \ref{lem4}, we conclude that the transition of the state sum decomposition from Step 2 to Step 3 is given by the interior multiplication by the Fibonacci element $\mathcal{F}_{2n-2}$ defined earlier. In order to determine the coefficient of $\epsilon_1 \otimes \cdots \otimes \epsilon_{2n-2}$, we need to understand all possible ways to  construct $\epsilon_1 \otimes \cdots \otimes \epsilon_{2n-2}$ through the interior multiplication by $\mathcal{F}_{2n-2}$. By Lemma \ref{lem4} we have the following identity
\[ \mathcal{F}_{2n-2}= \underbrace{\mathcal{F}_{2} \cdots \mathcal{F}_{2}}_{(2n-3) \, \,  \text{times}} = \left( \mathcal{F}_{2} \right)^{2n-3},  \]
where the interior multiplication of consecutive pairs of $\mathcal{F}_{2}$ multiplies the second factor of the first $\mathcal{F}_{2}$ with the first factor of second $\mathcal{F}_{2}$, that is
\[ \mathcal{F}_{2} \cdot \mathcal{F}_{2} = (y \otimes y + n \otimes n) \cdot (y \otimes y + n \otimes n)= y \otimes (y \cdot y) \otimes y + y \otimes (y \cdot n) \otimes n + n \otimes (n \cdot y) \otimes y + n \otimes (n \cdot n) \otimes n. \]

Next, we analyse how multiplication by $\mathcal{F}_{2}$ affect simple tensors on both sides: 
\begin{align*}
    n \cdot (y \otimes y + n \otimes n) \cdot n &= n \otimes n, \\ 
    n \cdot (y \otimes y + n \otimes n) \cdot y &= n \otimes y, \\ 
   y \cdot (y \otimes y + n \otimes n) \cdot n &= y \otimes n, \\
   y \cdot (y \otimes y + n \otimes n) \cdot y &= y \otimes y + n \otimes n.
\end{align*}
Note that $n \otimes n $ can appear in two ways, either from $n \cdot \mathcal{F}_{2}  \cdot n$ or from $y \cdot \mathcal{F}_{2} \cdot y$. Thus in the opposite direction, in terms of sequences $(n \otimes n) \longleftrightarrow (0,0) $ expands to $(0,0)+ (1,1)$. Similarly $(1,0), (0,1)$ and $(1,1)$ uniquely expands to themselves.  Using the same reasoning for each of the $2n-3$ middle $\mathcal{F}_{2}'s$, the expansion map $E_{2n-3}$ captures the phenomenon where the basic expansion $E^{i}_{2n-3}$ captures the $i$-th $\mathcal{F}_{2}$.

The reasoning in the last step is similar. It follows from the computation below:
\begin{align*}
    y \cdot (y \otimes y + n \otimes n)= y \otimes y + n \otimes n \, \,   &\text{AND}  \, \, n \cdot (y \otimes y + n \otimes n) = n \otimes y, \\ 
     (y \otimes y + n \otimes n) \cdot y = y \otimes y + n \otimes n \, \,   &\text{AND}  \, \,  (y \otimes y + n \otimes n) \cdot n = y \otimes n. 
\end{align*}
In the opposite direction both $y \otimes y$ and $n \otimes n $ contracts to $y$ on either end. Similarly, if the simple tensor starts with $y \otimes n $ or ends with $n \otimes y$ then it must contract to $n$ on either end. These observations are precisely encapsulated by the map $K_{2n}$ after the translation into $\{ 0,1 \}$ sequences. 

Going in the opposite direction we conclude the values of $C_{n}=   C_{n-1} \circ R_{2n-2} \circ E_{2n-2} \circ K_{2n}$ exactly captures the coefficients in the state sum decomposition of $\mc_{n}$. 
\end{proof}

\begin{proof}[Proof of Theorems \ref{thm2} and \ref{thm3}]
Combining Theorem \ref{thm6} and Lemma \ref{lem5} we conclude the proofs of Theorems \ref{thm2} and \ref{thm3}.    
\end{proof}

\begin{remark}
   We note that, the largest numbers appearing in the state sum decomposition of $\mc_n$ are the Large Schr\"oder numbers. This is due to the fact that these numbers count the number of perfect matchings in the cellular graph formed by a triangular grid of $n$ squares (see, the work of Ciucu \cite{zcell} for more details).
\end{remark}

We now prove Lemma \ref{lem1}.
\begin{proof}[Proof of Lemma \ref{lem1}]
Note that for any sequence 
$\left( \epsilon_0, \ldots ,\epsilon_{n-1} \right) \in \{0,1 \}^{n}$,  we can associate a unique integer by treating it as a binary expansion, that is 
\[ \left( \epsilon_0, \cdots ,\epsilon_{n-1} \right) \longleftrightarrow \sum_{i=0}^{n-1} \epsilon_{i} \cdot 2^{i}.\]
We prove the following observation: For $(\epsilon_{1}, \ldots, \epsilon_{2n}) \in \{0,1 \}^{2n}$ such that  $(\epsilon_{1}, \ldots, \epsilon_{2n}) \in \supp(C_{n}) $, we have $ 3| \sum_{i=1}^{2n} \epsilon_{i} \cdot 2^{i-1} $.

Lemma \ref{lem1} follows quite easily from this observation. We prove this observation using induction on $n$. For $n=1$, it can be seen quite easily. Our strategy for the general step would be to follow the steps described in Lemma \ref{lem5}. We now show that the divisibility condition remains the same in each intermediate step. 

Assume the condition remains true for $C_{n}$. The divisibility condition in Step 1 is the induction hypothesis. In Step 2, the element $ \left( \epsilon_1, \ldots, \epsilon_{2n} \right)$ transforms to $ \left( 1-\epsilon_1, \ldots, 1-\epsilon_{2n} \right)$. The divisibility condition follows from the following fact : \[ 1+ 2+ 2^{2}+ \cdots + 2^{2n-1}= 2^{2n}-1 = 4^{n}-1  \equiv 0 \pmod 3. \]

In Step 3, when we have the consecutive terms in the sequence $\epsilon_i=0, \epsilon_{i+1}=1$ or $\epsilon_{i}=1, \epsilon_{i+1}= 0$ or $\epsilon_{i}=\epsilon_{i+1}=0$, it remains same after the interior multiplication by $\mathcal{F}_{2n}$. However if the consecutive terms are $ \epsilon_{i}= \epsilon_{i+1}=1$ then it branches off to two different elements with corresponding terms being $\epsilon_{i}=\epsilon_{i+1}=1$ and $\epsilon_i= \epsilon_{i+1}=0$. In the first case, the divisibility condition is trivial. In the second case, the associated numbers differ by $2^{i+1}+2^{i}=3\cdot 2^{i}$, which is  divisible by $3$.

Finally, we verify the divisibility condition in Step 4. For the sequence $\left( \epsilon_1, \epsilon_2, \ldots, \epsilon_{2n-1}, \epsilon_{2n} \right) \in \{0,1 \}^{2n} $ we denote the associated number by $A= \sum_{i=1}^{2n} \epsilon_i \cdot 2^{i-1}$. Then we have the following
\begin{itemize}
    \item  If $\epsilon_1=\epsilon_{2n}=0$, then the sequence transforms to $(1, \epsilon_{1}, \epsilon_2, \ldots, \epsilon_{2n-1}, \epsilon_{2n}, 1)$. The associated number for the new sequence is given by $2\cdot A + 1 + 2^{2n+1} \equiv 0  \pmod 3$ when $A \equiv 0 \pmod 3$.
    \item If $\epsilon_1= 1$ and $\epsilon_{2n}=0$, then the sequence transforms to two new sequences: \[(1, \epsilon_1, \epsilon_2, \ldots, \epsilon_{2n-1},\epsilon_{2n}, 1)\quad \text{and}\quad(0, 1- \epsilon_1, \epsilon_2, \ldots, \epsilon_{2n-1},\epsilon_{2n}, 1).\] In the first one,  the associated number is given by $2\cdot A + 1+ 2^{2n+1}$. In the second one, the associated number is given by $2 \cdot A -2 + 2^{2n+1} \equiv 0 \pmod 3$ when $A \equiv 0 \pmod 3$. 
    \item The case $\epsilon_1=0$ and $\epsilon_{2n}=1$ is similar to the previous case. 
    \item Finally with $\epsilon_1= \epsilon_{2n}=1$ the sequence transforms to four different sequences: 
    \begin{align*}
            \left( 1, \epsilon_1, \epsilon_2, \cdots, \epsilon_{2n-1}, \epsilon_{2n}, 1 \right)&, \left( 0, 1- \epsilon_1, \epsilon_2, \cdots, \epsilon_{2n-1}, \epsilon_{2n}, 1 \right) \\ \left( 1, \epsilon_1, \epsilon_2, \cdots, \epsilon_{2n-1}, 1-\epsilon_{2n}, 0 \right)&,  \left( 0, 1-\epsilon_1, \epsilon_2, \cdots, \epsilon_{2n-1}, 1-\epsilon_{2n}, 0 \right). 
    \end{align*}
    The associated numbers are given by $2\cdot A + 1+ 2^{2n+1}$, $2 \cdot A -2 + 2^{2n+1}$, $2 \cdot A +1 - 2^{2n}$ and $2A - 2- 2^{2n}$ respectively. The divisibility condition can be checked similarly. \qedhere
\end{itemize}
\end{proof}

\subsection{Proof of Theorem \ref{thm5}}

Observe that finding the number of diagonally and anti-diagonally symmetric, that is $\left<r^2, t \right>$- invariant domino tilings of $\ad(n)$ amounts to computing the state sum decomposition of the class of graphs shown in Figure \ref{fig:omc}. We call this collection of graphs $\{ \overline{\mc}_{n} \}_{n \geq 1}$, for $n=1, 2, 3$ and $4$ the graphs are as shown in Figure \ref{fig:omc}. 

\begin{figure}[htb!]\label{fig:omc}
\includegraphics[scale=1]{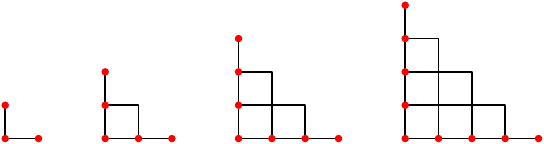}
\caption{Dual Graph associated with Diagonally \& Anti-diagonally Symmetric Domino Tilings of Aztec Diamonds.}
\end{figure}

Like earlier, we have the following result.
\begin{theorem}\label{thm7}
    The sum of the coefficients in the state sum decomposition of $\overline{\mc}_{n}$, weighted according to whether the middle distinguished vertex (that is, the one on the bottom left corner) corresponds to a $n$ or a $y$ with weights $1$ and $2$ respectively, enumerates the $\left< r^2, t \right>$- invariant domino tilings of $\ad(n)$.
\end{theorem}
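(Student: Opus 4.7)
The plan is to adapt the strategy used in the proof of Theorem \ref{thm6} to the larger symmetry group $\left<r^2, t\right>$, which has order four since $r^2$ and $t$ commute.  Let $D$ denote the planar dual graph of $\ad(n)$.  The two axes of symmetry of $\ad(n)$ (the diagonal fixed by $t$ and the anti-diagonal fixed by $r^2 t$) cut $D$ into four congruent pieces, each isomorphic to $\overline{\mc}_n$ after the usual identifications.  The $2n+1$ distinguished vertices of $\overline{\mc}_n$ sit along the two cut edges, and the middle distinguished vertex (the bottom-left corner) lies at the intersection of the two axes and is the unique distinguished vertex fixed by $r^2$.  A $\left<r^2, t\right>$-invariant perfect matching of $D$ is then determined by its restriction to a single copy of $\overline{\mc}_n$ together with compatible boundary data, which is exactly the information packaged by the state sum decomposition $v_{\overline{\mc}_n}$.

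First I would make this cut-and-reassemble picture precise: any $\left<r^2, t\right>$-invariant matching $M$ of $D$ restricts to a matching of $\overline{\mc}_n$ with an associated defect pattern $\epsilon_1 \otimes \cdots \otimes \epsilon_{2n+1}$, and conversely any matching of $\overline{\mc}_n$ with prescribed defect admits a symmetric extension to $D$ obtained by reflecting across the $t$-axis and then applying $r^2$.  As in the proof of Theorem \ref{thm6}, at every distinguished vertex other than the middle one the defect on the original copy forces the defect on the reflected copy via the involution $R$, so the extension across the $t$-axis is unique and contributes weight $1$ per matching of $\overline{\mc}_n$.

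The heart of the proof is the local analysis at the middle vertex.  If $\epsilon_{n+1} = n$, then that vertex is already matched inside $\overline{\mc}_n$; its $r^2$-image is the same matched edge, and the symmetric extension is therefore unique, contributing weight $1$.  If $\epsilon_{n+1} = y$, then the middle vertex is available and must be matched to a partner that lies across one of the two axes; a direct local case check near the center of $D$ will show that there are exactly two $\left<r^2, t\right>$-invariant ways to complete this matching, which is the source of the factor of $2$ in the stated weighting.  Summing the coefficients of $v_{\overline{\mc}_n}$ with weight $1$ when $\epsilon_{n+1}=n$ and weight $2$ when $\epsilon_{n+1}=y$ will then enumerate precisely the $\left<r^2, t\right>$-invariant perfect matchings of $D$, which is the desired count of symmetric tilings of $\ad(n)$.

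The hard part will be the local bookkeeping at the center of $D$: one has to verify that the reflection gluing across the $t$-axis and the rotational identification by $r^2$ interact consistently at the middle vertex, so that the dichotomy ``weight $1$ if $\epsilon_{n+1}=n$, weight $2$ if $\epsilon_{n+1}=y$'' is exact and there is no over- or undercounting that leaks in from the distinguished vertices immediately adjacent to the middle one on either axis.  Once this local extension count is confirmed, Theorem \ref{thm7} follows immediately by summing the weighted coefficients in the state sum decomposition of $\overline{\mc}_n$ over all defect patterns.
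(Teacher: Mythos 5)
Your proposal is correct and takes essentially the same route as the paper: reduce to matchings of the quarter graph $\overline{\mc}_{n}$ and isolate the central distinguished vertex, whose orbit under $\left<r^2,t\right>$ is the central $2\times 2$ block (a $4$-cycle in the dual graph) that admits exactly two symmetric internal matchings when that vertex carries a $y$ --- precisely the paper's source of the weight $2$, versus the unique extension (weight $1$) when it carries an $n$. The only difference is cosmetic: the paper performs your ``local case check'' by observing that the central $2\times 2$ square of $\ad(n)$ has exactly two domino tilings, both $\left<r^2,t\right>$-invariant, rather than arguing on the dual $4$-cycle.
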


\begin{proof}
Dualizing the picture of $\ad(n)$ we note that the number of $\left< r^2, t \right>$- invariant domino tilings of $\ad(n)$ is the same as the number of perfect matchings symmetric with respect to the dotted lines shown in Figure \ref{fig:dot} (here we take $n=5$, and ignore the red dotted square for the moment).   
     \begin{figure}[htbp!]\label{fig:dot}
       \centering
       \includegraphics[scale=1]{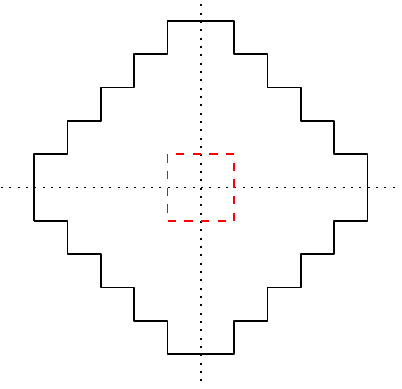}
      \caption{Aztec Diamond with diagonal and anti-diagonal symmetry highlighted.}
      \label{fig:dad}
   \end{figure}

   The red dotted square in the middle is special. Any $\langle r^2, t\rangle$-invariant domino tiling will either contain this square as a sub-tiling or it will not contain the square as a sub-tiling (that is, the unit squares making up this $2\times 2$ square will tile with unit squares which lie outside the $2\times 2$ square). The first case corresponds to a $y$ assigned to the bottom left corner distinguished vertex of $\omc_n$, while the second case corresponds to a $n$ being assigned to this vertex. In the first case, the $2$ tilings of the $2\times 2$ square correspond to a weight of $2$ for the relevant coefficient in the state sum decomposition of $\omc_n$, while the other case, corresponds to to a weight of $1$ for the relevant coefficient in the state sum decomposition of $\omc_n$. This completes the proof.
\end{proof}

Next, we relate the state sum decomposition of $\omc_{n}$ with the function $C^\prime_n$ discussed in Section \ref{sec:two}. We again use the \textit{valuation map} defined in the previous subsection. The relationship between $\omc_{n}$ and $C^\prime_{n}$ is explicitly described by the following lemma. 


\begin{lemma}\label{lem6}
    The coefficient of $\epsilon_1 \otimes \cdots  \otimes \epsilon_{2n+1}$ in the state sum decomposition of $\omc_{n+1}$ is exactly equal to $C^\prime_{n} \bigl( (\mathcal{V}(\epsilon_1), \ldots, \mathcal{V}(\epsilon_{2n+1}) ) \bigl)$.
\end{lemma}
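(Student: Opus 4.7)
The plan is to mimic the proof of Lemma \ref{lem5}, using induction on $n$ with step size $2$ as dictated by the recursive formula $C^\prime_{n+2} = C^\prime_n \circ (R_n \otimes id \otimes R_n) \circ cK_{n+1} \circ (E_{n+1} \otimes E_{n+1}) \circ mK_{n+1} \circ K_{2n+5}$. The two base cases $n=1$ and $n=2$ can be verified directly by exhaustively computing the state sum decompositions of $\omc_2$ and $\omc_3$, counting perfect matchings of the subgraphs obtained after deleting each possible subset of the distinguished vertices, and matching against the explicit values declared for $C^\prime_1$ and $C^\prime_2$.

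For the inductive step, I would construct $\omc_{n+3}$ from $\omc_{n+1}$ through a sequence of graph operations, each corresponding to one of the algebraic maps in the composition, read right to left. The outer contraction $K_{2n+5}$ encodes the attachment of pendant $L_1$ edges at the extreme ends of the distinguished-vertex sequence, exactly as in Step 4 of the proof of Lemma \ref{lem5}. The middle contraction $mK_{n+1}$ captures the triangular tip lying on the symmetry axis of $\omc_{n+3}$; the nine cases in its definition correspond to the nine possible occupancy patterns of the three consecutive distinguished vertices near the tip, with the local matchings of the tip forcing or forbidding the induced labels on the new boundary. The tensor $E_{n+1} \otimes E_{n+1}$ corresponds to interior multiplication by two Fibonacci elements $\mathcal{F}_{2n+2}$, one acting on each of the two mirror-symmetric halves flanking the tip, just as in Step 3 of the previous proof. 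The central contraction $cK_{n+1}$ encodes the identification of the two halves at the centre, where the perfect-matching constraint forces exactly one of the two paired coordinates to carry a $y$. Finally, $R_n \otimes id \otimes R_n$ applies the Transfer Lemma (Lemma \ref{lem4}) to the non-middle distinguished vertices of the residual $\omc_{n+1}$; the identity factor on the central coordinate reflects that the middle vertex carries the special weight described in Theorem \ref{thm7} and is therefore left untouched. Applying the induction hypothesis then identifies the residual state sum with $C^\prime_n$, closing the composition. The Patching Lemma (Lemma \ref{lem3}) provides the general formal justification that each graphical operation gives the asserted algebraic transformation of the state sum decomposition.

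The main obstacle is the local verification underlying $mK_{n+1}$ and $cK_{n+1}$. For $mK_{n+1}$, one must enumerate, for each of the nine occupancy patterns of the three relevant distinguished vertices, the distinct perfect matchings of the small tip region of $\omc_{n+3}$ and confirm that they match the right-hand sides, including the two-summand image of $(0,0,0)$ and the vanishing case $(1,0,1)$ which admits no valid local matching. For $cK_{n+1}$, one verifies that the central-edge identification produces a perfect matching precisely when exactly one of the two merged coordinates is present, with the remaining coordinates transferring identically. Both checks are routine but require careful bookkeeping of the triangular geometry of $\omc_n$ near its diagonal corner, which has no analogue in the purely rectangular $\mc_n$ of Lemma \ref{lem5}, and the extra splitting-and-merging step encoded by $cK_{n+1}$ is the genuinely new ingredient compared to the diagonally-symmetric case.
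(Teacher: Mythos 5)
Your overall strategy is the same as the paper's: induction in steps of two with base cases $n=1,2$, building the larger graph from the smaller one by a sequence of connected sums, and identifying each algebraic map in the composition with one graphical layer via the Patching Lemma (Lemma \ref{lem3}) and the Transfer Lemma (Lemma \ref{lem4}). Your assignments of $K_{2n+5}$, $mK_{n+1}$, $E_{n+1}\otimes E_{n+1}$ and $R_n\otimes id\otimes R_n$ to the outer pendant edges, the axial $L_3$, the two Fibonacci multiplications on the two arms, and the transfer on the non-central vertices all agree with the paper's Steps 6, 5, 4 and 2 (note the paper performs the graph operations inside-out, i.e.\ in the order obtained by reading the composition left to right after $C^\prime_n$; each factor $E_{n+1}$ corresponds to one element $\mathcal{F}_{n+1}$, not $\mathcal{F}_{2n+2}$; and $mK$ has eight cases, not nine).

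The one substantive error is your description of the verification underlying $cK_{n+1}$. In the paper this map is realized not as an ``identification of two halves'' but as the connected sum of $\omc_{n-1}$ with an $L_3$ glued along the \emph{middle} vertex of $L_3$ to the single special bottom-left vertex, so that one old distinguished coordinate is replaced by the two end-vertices of $L_3$. More importantly, the local rule you propose to verify --- that ``the central-edge identification produces a perfect matching precisely when exactly one of the two merged coordinates is present'' --- contradicts the definition of $cK_{n+1}$ itself: the pair $(1,1)$ is \emph{not} annihilated but contracts to the label $0$ (i.e.\ $y\otimes y$ contracts to $n$), and only the pair $(0,0)$ maps to the zero element of the free abelian group. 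The correct local computation, as in the paper, is that the glued middle vertex must end up in state $n$, so the surviving terms of $y\cdot\mathcal{F}_3$ give the outer pairs $(n,y)$ and $(y,n)$ (each contracting to $y$), while $n\cdot\mathcal{F}_3$ gives the outer pair $(y,y)$ (contracting to $n$). As literally stated, your check for $cK_{n+1}$ would fail to reproduce the map's definition; carrying out the honest local enumeration fixes this, and the rest of your argument then goes through as in the paper.
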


\begin{proof}
   The proof is again by using induction on $n$. For $n=1$ and $n=2$ this relationship can easily be verified. We assume the result is true for $n-1$. We now build $\omc_{n+1}$ from $\omc_{n-1}$ through the following steps as illustrated for $n-1=5$ in Figure \ref{fig:steps-dad}.
    \begin{itemize}
        \item[Step 1] We start with the graph $\omc_{n-1}$ (we mark the vertices $1, 2, \ldots, 2n-3$ as shown in Figure \ref{fig:steps-dad}).
        \item[Step 2] We append pendant edges in $\omc_{n-1}$ along the distinguished vertices, except for the vertex in the bottom-left corner.
        \item[Step 3] We take the connected sum of $\omc_{n-1}$ and $L_{3}$ along the middle vertex of $L_3$ and the the vertex in the bottom-left corner of $\omc_{n-1}$.
        \item[Step 4] We take the connected sum of $L_{n-1}$ from the left and from the bottom of the graph obtained in the previous step (see the middle figure of the second row in Figure \ref{fig:steps-dad}).
        \item[Step 5] We take the connected sum of the graph obtained in the previous step and $L_{3}$ along the extreme vertices of $L_3$ and the vertices in the bottom corners of the graph obtained in the previous step (see left figure of second row in Figure \ref{fig:steps-dad}).
  \item[Step 6] We take the connected sum of $L_2$ and the graph obtained in the previous step, once from the top left corner and once from the bottom right corner (see the left figure of the third row in Figure \ref{fig:steps-dad}). This now results in $\omc_{n+1}$.
    \end{itemize}

        \begin{figure}[htbp!]
       \centering
       \includegraphics[scale=1]{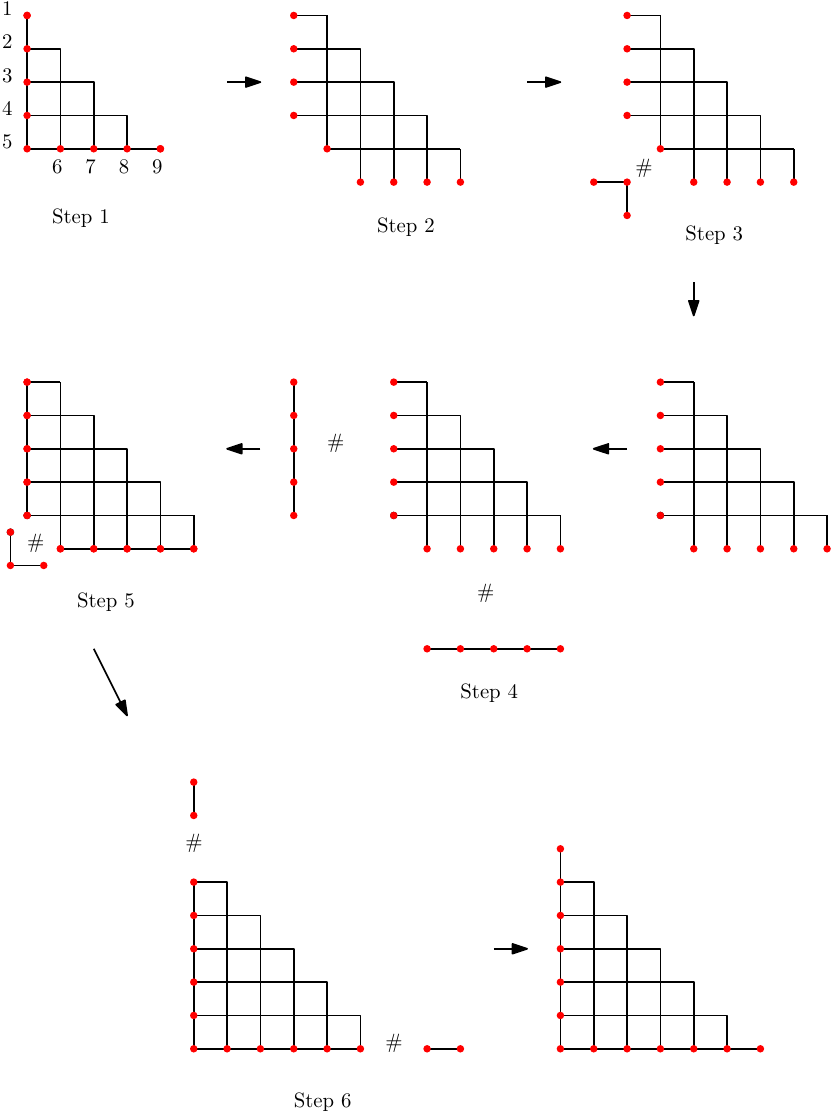}
      \caption{Steps 1 to 6, starting from top left and following the arrows to bottom right.}
      \label{fig:steps-dad}
   \end{figure}

  Since the process was explained in detail in the proof of Lemma \ref{lem5}, we only sketch the relevant details here. The transition from Step 1 to Step 2 is via the map $R_n \otimes \text{id}\otimes R_n$. This is easy to see as we are appending edges only to the top $n$ vertices on the left and right $n$ vertices on the bottom. The transition from Step 2 to Step 3 is described as follows: we are taking the connected sum of $L_{3}$ along the middle vertex. In this process, the distinguished vertex turns to a non-distinguished vertex in the new configuration in step $3$. The boundary vertices of $L_3$ turn into two new additional distinguished vertices. Taking the interior multiplication of the state sum decomposition of $L_3$ along the middle vertex we observe the following multiplication table: 
   \begin{align*}
       y\otimes (y \cdot \mathbf{y}) \otimes y + n \otimes (n \cdot \mathbf{y}) \otimes y + y \otimes (n \cdot \mathbf{y}) \otimes n &= y\otimes y  \otimes y + \mathbf{n \otimes n \otimes y} + \mathbf{ y \otimes n  \otimes n}, \\ 
      y\otimes (y \cdot \mathbf{n}) \otimes y + n \otimes (n \cdot \mathbf{n}) \otimes y + y \otimes (n \cdot \mathbf{n}) \otimes n &= \mathbf{ y\otimes n  \otimes y},
   \end{align*}
   where the $\mathbf{y}$ or $\mathbf{n}$ denotes the state sum decomposition of the middle distinguished vertex in Step 2. Since it turns to a non-distinguished vertex in Step 3, the only relevant terms are those that turn $n$ after the multiplication on the right-hand side. The relevant terms are bold on the right-hand side of the equation. Finally, by the identification \[ y \longleftrightarrow 1 ; n \longleftrightarrow 0,\] the central contraction map $cK_{n}$ exactly captures the situation in the opposite direction. 
  The transition from Step 3 to Step 4 is via the map $E_{n+1}\circ E_{n+1}$ in the opposite direction, this was explained in the proof of Lemma \ref{lem5} going from Step 2 to Step 3 in that case. Similarly, the transition from Step 4 to Step 5 is via the middle contraction map $mK_{n+1}$ in the opposite direction. It follows from observing the multiplication table
  \begin{align*}
      y \cdot \left(y\otimes y  \otimes y + n \otimes n \otimes y +  y \otimes n  \otimes n\right) \cdot y &= \left(y\otimes y  \otimes y + n \otimes n \otimes y +  y \otimes n  \otimes n\right), \\ 
    y \cdot \left(y\otimes y  \otimes y + n \otimes n \otimes y +  y \otimes n  \otimes n\right) \cdot n &= y \otimes y \otimes n + n \otimes n \otimes n, \\ 
    n \cdot \left(y\otimes y  \otimes y + n \otimes n \otimes y +  y \otimes n  \otimes n\right) \cdot y &= n \otimes y \otimes y + n \otimes n \otimes n, \\ 
    n \cdot \left(y\otimes y  \otimes y + n \otimes n \otimes y +  y \otimes n  \otimes n\right) \cdot n &= n \otimes y \otimes n. \end{align*}
  Finally in the final step, Step 6, two pendent edges are being appended. This situation was explained in the proof of Lemma \ref{lem5} going from Step 7 to Step 8 in that case. 
   \end{proof}

\begin{proof}[Proof of Theorem \ref{thm5}]
    Combining Theorem \ref{thm7} and Lemma \ref{lem6} we conclude the proof of Theorem \ref{thm5}.
\end{proof}


\begin{thebibliography}{ACJvM15}

\bibitem[ACJvM15]{ch-2}
Mark Adler, Sunil Chhita, Kurt Johansson, and Pierre van Moerbeke.
\newblock Tacnode {GUE}-minor processes and double {Aztec} diamonds.
\newblock {\em Probab. Theory Relat. Fields}, 162(1-2):275--325, 2015.

\bibitem[BCJ22]{ch-1}
Vincent Beffara, Sunil Chhita, and Kurt Johansson.
\newblock Local geometry of the rough-smooth interface in the two-periodic {Aztec} diamond.
\newblock {\em Ann. Appl. Probab.}, 32(2):974--1017, 2022.

\bibitem[BK18]{bk}
Alexey Bufetov and Alisa Knizel.
\newblock Asymptotics of random domino tilings of rectangular {Aztec} diamonds.
\newblock {\em Ann. Inst. Henri Poincar{\'e}, Probab. Stat.}, 54(3):1250--1290, 2018.

\bibitem[CEP96]{cohn}
Henry Cohn, Noam Elkies, and James Propp.
\newblock Local statistics for random domino tilings of the {Aztec} diamond.
\newblock {\em Duke Math. J.}, 85(1):117--166, 1996.

\bibitem[CHK23]{t-3}
Sylvie Corteel, Frederick Huang, and Christian Krattenthaler.
\newblock Domino tilings of generalized {Aztec} triangles.
\newblock Preprint, {arXiv}:2305.01774 [math.{CO}] (2023), 2023.

\bibitem[Ciu96]{zcell}
Mihai Ciucu.
\newblock Perfect matchings of cellular graphs.
\newblock {\em J. Algebr. Comb.}, 5(2):87--103, 1996.

\bibitem[Ciu97]{CiucuSymmetry}
Mihai Ciucu.
\newblock Enumeration of perfect matchings in graphs with reflective symmetry.
\newblock {\em J. Combin. Theory Ser. A}, 77(1):67--97, 1997.

\bibitem[DF04]{DummitFoote}
David~S. Dummit and Richard~M. Foote.
\newblock {\em Abstract algebra}.
\newblock John Wiley \& Sons, Inc., Hoboken, NJ, third edition, 2004.

\bibitem[DF21]{t-1}
Philippe Di~Francesco.
\newblock Twenty {Vertex} model and domino tilings of the {Aztec} triangle.
\newblock {\em Electron. J. Comb.}, 28(4):research paper p4.38, 50, 2021.

\bibitem[DFG19]{zfr}
Philippe Di~Francesco and Emmanuel Guitter.
\newblock The {Arctic} curve for {Aztec} rectangles with defects via the tangent method.
\newblock {\em J. Stat. Phys.}, 176(3):639--678, 2019.

\bibitem[EKLP92a]{AD1}
Noam Elkies, Greg Kuperberg, Michael Larsen, and James Propp.
\newblock Alternating-sign matrices and domino tilings. {I}.
\newblock {\em J. Algebr. Comb.}, 1(2):111--132, 1992.

\bibitem[EKLP92b]{AD2}
Noam Elkies, Greg Kuperberg, Michael Larsen, and James Propp.
\newblock Alternating-sign matrices and domino tilings. {II}.
\newblock {\em J. Algebr. Comb.}, 1(3):219--234, 1992.

\bibitem[FF11]{statphy}
Benjamin~J. Fleming and Peter~J. Forrester.
\newblock Interlaced particle systems and tilings of the {Aztec} diamond.
\newblock {\em J. Stat. Phys.}, 142(3):441--459, 2011.

\bibitem[Gor14]{statphy-2}
Vadim Gorin.
\newblock From alternating sign matrices to the {Gaussian} unitary ensemble.
\newblock {\em Commun. Math. Phys.}, 332(1):437--447, 2014.

\bibitem[JPS98]{shor}
William Jockusch, James Propp, and Peter Shor.
\newblock Random domino tilings and the arctic circle theorem, 1998.

\bibitem[KKS25]{t-2}
C.~Koutschan, C.~Krattenthaler, and M.~J. Schlosser.
\newblock Determinant evaluations inspired by {Di} {Francesco}'s determinant for twenty-vertex configurations.
\newblock {\em J. Symb. Comput.}, 127:34, 2025.
\newblock Id/No 102352.

\bibitem[Lai14]{l-1}
Tri Lai.
\newblock A simple proof for the number of tilings of quartered {Aztec} diamonds.
\newblock {\em Electron. J. Comb.}, 21(1):research paper p1.6, 13, 2014.

\bibitem[Lai16a]{l-5}
Tri Lai.
\newblock Enumeration of antisymmetric monotone triangles and domino tilings of quartered {Aztec} rectangles.
\newblock {\em Discrete Math.}, 339(5):1512--1518, 2016.

\bibitem[Lai16b]{l-4}
Tri Lai.
\newblock A generalization of {Aztec} diamond theorem. {II}.
\newblock {\em Discrete Math.}, 339(3):1172--1179, 2016.

\bibitem[Lee23]{Lee1}
Yi-Lin Lee.
\newblock Off-diagonally symmetric domino tilings of the {Aztec} diamond.
\newblock {\em Electron. J. Comb.}, 30(4):research paper p4.20, 30, 2023.

\bibitem[Lee24]{Lee2}
Yi-Lin Lee.
\newblock Off-diagonally symmetric domino tilings of the aztec diamond of odd order.
\newblock {\em Advances in Applied Mathematics}, 161:102759, 2024.

\bibitem[Pau24]{pp}
Pravakar Paul.
\newblock On {The} {Application} {Of} {State} {Sum} {Decomposition} method.
\newblock Preprint, {arXiv}:2410.12751 [math.{CO}] (2024), 2024.

\bibitem[PS24]{PaulSaikia}
Pravakar Paul and Manjil~P. Saikia.
\newblock A novel approach to counting perfect matchings of graphs.
\newblock preprint (arXiv:2408.10273), 2024.

\bibitem[Sai17]{Saikia}
Manjil~P. Saikia.
\newblock Enumeration of domino tilings of an {Aztec} rectangle with boundary defects.
\newblock {\em Adv. Appl. Math.}, 89:41--66, 2017.

\bibitem[Yan91]{Yang}
Bo-Yin Yang.
\newblock {\em Two enumeration problems about the {A}ztec diamonds}.
\newblock ProQuest LLC, Ann Arbor, MI, 1991.
\newblock Thesis (Ph.D.)--Massachusetts Institute of Technology.

\end{thebibliography}
\end{document}